\newtheorem{defn}{Definition}[section]
\newtheorem{cor}[defn]{Corollary}
\newtheorem{thm}[defn]{Theorem}
\newtheorem{prop}[defn]{Proposition}
\newtheorem{lemma}[defn]{Lemma}
\newcommand{\be}{\begin{equation}}
\newcommand{\ee}{\end{equation}}
\newcommand{\bea}{\begin{eqnarray}}
\newcommand{\eea}{\end{eqnarray}}
\newcommand{\beas}{\begin{eqnarray*}}
\newcommand{\eeas}{\end{eqnarray*}}
\newcommand{\bp}{\begin{proof}}
\newcommand{\ep}{\end{proof}}
\newcommand{\reftit}{\textit}    
\newcommand{\refis}{\textbf}     
\begin{document}

\title[Cross-Multiplicative Coalescent]{Cross-Multiplicative Coalescent Processes and Applications}

\author{Yevgeniy Kovchegov}
\address{Department of Mathematics, Oregon State University, Corvallis, OR  97331, USA}
\email{kovchegy@math.oregonstate.edu}
\thanks{This work was supported by the FAPESP award 2016/19286-0 and by the NSF award DMS-1412557.}

\author{Peter T. Otto}
\address{Department of Mathematics, Willamette University, Salem, OR 97302, USA}
\email{potto@willamette.edu}

\author{Anatoly Yambartsev}
\address{Department of Statistics, Institute of Mathematics and Statistics, University of S\~ao Paulo, Rua do Mat\~ao 1010,
CEP 05508--090, S\~ao Paulo SP, Brazil}
\email{yambar@ime.usp.br} 

\subjclass[2000]{Primary 60K35; Secondary 82B27} 

\begin{abstract}
We introduce and analyze a novel type of coalescent processes called {\it cross-multiplicative coalescent} 
that models a system with two types of particles, $A$ and $B$. The bonds are formed only between the pairs of particles of opposite types 
with the same rate for each bond, 
producing connected components made of particles of both types.
We analyze and solve the Smoluchowski coagulation system of equations obtained as a hydrodynamic limit of  the corresponding Marcus-Lushnikov process. 
We establish that the cross-multiplicative kernel is a gelling kernel, and find the gelation time. As an application, we derive the limiting mean length of a minimal spanning tree on a complete bipartite graph $K_{\alpha[n], \beta[n]}$ with partitions of sizes $\alpha[n]=\alpha n +o(\sqrt{n})$ and $\beta[n]=\beta n +o(\sqrt{n})$ and independent edge weights, distributed uniformly over $[0, 1]$. 

\end{abstract}

\date{\today}

\maketitle

\tableofcontents

\section{Introduction}\label{sec:intro}

The coalescence dynamics of clusters with multidimensional weight (mass) vectors was originally considered in Krapivsky and Ben-Naim \cite{KBN98} and Vigil and Ziff \cite{VZ98} in the context of aggregation kinetics with applications to aerosol dynamics and copolymerization kinetics.
In this paper, we consider a coalescent process whose clusters have vector-valued weights in $\mathbb{R}_+^2$.
The coalescent process begins with $\alpha[n]=\alpha n +o(\sqrt{n})$ singletons of weight $\left[\!\!\begin{array}{c}1 \\0\end{array}\!\!\right]$ and $\beta[n]=\beta n +o(\sqrt{n})$ singletons of weight $\left[\!\!\begin{array}{c}0 \\1\end{array}\!\!\right]$. 
This continuous time Markov process evolves as follows.
Each pair of clusters with respective weight vectors ${\bf i}=\left[\!\!\begin{array}{c}i_1 \\i_2\end{array}\!\!\right]$ and ${\bf j}=\left[\!\!\begin{array}{c}j_1 \\j_2\end{array}\!\!\right]$
has the rate $K({\bf i},{\bf j})/n$ for coalescing into a cluster of weight ${\bf i}+{\bf j}$, where 
$$K({\bf i},{\bf j})=i_1j_2+i_2j_1$$
is the {\it cross-multiplicative coalescent kernel} governing the coalescent process. 
Such process will be called the {\it cross-multiplicative coalescent process}.

\medskip
\noindent
As a physical model, one may consider a system with two types of particles, $A$ and $B$. 
The process begins with $\alpha[n]$ particles of type $A$ and $\beta[n]$ particles of type $B$.
Each particle interacts only with the particles of opposite type, with which it may form a bond.
The bonds are formed independently, each with rate $1/n$.
Thus, the bonds may be formed only between the pairs of particles of opposite types, producing connected components (clusters). 
In these clusters, each pair of neighbor vertices will be of opposite type.
The model can be interpreted as a bond percolation model on a complete bipartite graph $K_{\alpha[n], \beta[n]}$ with 
the probability of an edge being open $p=1-e^{-t/n}$ increasing from zero to one as the time $t$ increases from zero to infinity.
See Subsect.~\ref{sec:ERonKnn}.

\medskip
\noindent
Let $\zeta_{i_1,i_2}^{[n]}(t)$ denote the number of the components of weight $\left[\!\!\begin{array}{c}i_1 \\i_2\end{array}\!\!\right]$ at time $t$.
The hydrodynamic limit
$$\lim\limits_{n \to \infty} \sup\limits_{s \in [0,T]} \left|n^{-1}\zeta_{i_1,i_2}^{[n]}(s)-\zeta_{i_1,i_2}(s)\right|=0 \qquad \text{ a.s.}$$
for an arbitrary $T>0$ is established in Sect.~\ref{sec:hydcross} via the weak convergence results of Kurtz \cite{EK,Kurtz81} 
for density dependent population processes.
The limiting functions $\zeta_{i_1,i_2}(t)$ indexed by $\mathbb{Z}_+^2 \setminus \{(0,0)\}$ are expressed as the solutions of the following {\it modified Smoluchowski coagulation system} of differential equations
\begin{equation}\label{eqn:Knnt_intro}
{d \over dt}\zeta_{i_1,i_2}(t)=-(\beta i_1+ \alpha i_2)\zeta_{i_1,i_2}(t)+ {1 \over 2}\sum\limits_{\substack{\ell_1, k_1:~ \ell_1+k_1=i_1,\\ \ell_2, k_2:~ \ell_2+k_2=i_2}} (\ell_1 k_2+\ell_2 k_1)\zeta_{\ell_1,\ell_2}(t)\zeta_{k_1,k_2}(t)
\end{equation}
with the initial conditions  $\zeta_{i_1,i_2}(0)=\alpha \delta_{1,i_1}\delta_{0,i_2}+\beta \delta_{0,i_1}\delta_{1,i_2}$.
The above system has a unique solution as established in the following theorem.

\medskip
\noindent
{\bf Sect.~\ref{sec:knn}, Theorem \ref{thm:solutionKnnt}.}
{\it The modified Smoluchowski coagulation system of equations \eqref{eqn:Knnt_intro} with the initial conditions $~\zeta_{i_1,i_2}(0)=\alpha \delta_{1,i_1}\delta_{0,i_2}+\beta \delta_{0,i_1}\delta_{1,i_2}$ has the unique solution
$$\zeta_{i_1,i_2}(t)={i_1^{i_2-1}i_2^{i_1-1}\alpha^{i_1}\beta^{i_2} \over i_1!i_2!}e^{-(\beta i_1+ \alpha i_2)t}t^{i_1+i_2-1}.$$
}

\medskip
\noindent
This solution of the coagulation system enables us to establish gelation in the cross-multiplicative coalescent process.

\medskip
\noindent
{\bf Sect.~\ref{sec:knn}, Corollary \ref{cor:GelationTimeKanbn}.}
{\it The cross-multiplicative kernel is a gelling kernel, with the gelation time given by
$$T_{gel}={1 \over \sqrt{\alpha \beta}}.$$ 
}

\medskip
\noindent
We already mentioned the connection of cross-multiplicative coalescece to aggregation kinetics \cite{KBN98,VZ98}.
Besides this, the study of the cross-multiplicative coalescent process is justified by its relation to the Erd\H{o}s-R\'enyi process on $K_{\alpha[n], \beta[n]}$.
This relation will be used in the applications presented in Sect.~\ref{sec:applications} of this paper.

\subsection{Applications in minimal spanning trees}\label{sec:intro-apps}
As an application, we attempt to extend the connection between coalescent processes and random graph processes, e.g. Erd\H{o}s-R\'enyi random graph evolution as described in Sect.~\ref{sec:applications}. In particular, deriving a formula for the limiting length of the minimal spanning tree in a random graph process in terms of the solutions of the Smoluchowski coagulation equations for the corresponding coalescent process.

It is well known that in many cases the cluster dynamics of a random graph process can be replicated with a coalescent process. For example, the Erd\H{o}s-R\'enyi random graph process on $K_n$ can be tied to the $n$-particle multiplicative coalescent (see Aldous \cite{Aldous}). The connection lies in that the probability of two components merging, at a given time, depends only on the number of edges that connect those two components (rather than other structural properties). There are more elaborate examples. 

The cluster dynamics of a coalescent process (without merger history) is traced by an auxiliary process called the Marcus-Lushnikov process. The merger dynamics of such coalescent processes corresponds to a greedy algorithm for finding the minimal spanning tree in the respective random graph process. This observation allows us to express the limiting mean length of a minimal spanning tree in terms of the solutions of the Smoluchowski coagulation equations that represent the hydrodynamic limit of the Marcus-Lushnikov process corresponding to the random graph process. 

As a particular application of the proposed general approach we find the asymptotic limit for the mean length of a minimal spanning tree for the complete bipartite graph with partitions of sizes $\alpha[n]=\alpha n +o(\sqrt{n})$ and $\beta[n]=\beta n +o(\sqrt{n})$. See Sect.~\ref{sec:applications}. There, the probability of two components merging at a given time depends only on the number of edges that connect those two components. If connected component $C_i$ and $C_j$ have partition sizes $(i_1,i_2)$ and $(j_1,j_2)$ respectively, then there are $i_1j_2+i_2j_1$ edges which, when opened, would connect $C_i$ and $C_j$. 

\medskip
\noindent
{\bf Sect.~\ref{sec:applications}, Theorem \ref{thm:mainKnn}.}
{\it Let $\alpha, \beta >0$ and $L_n=L_n(\alpha,\beta)$ be the length of a minimal spanning tree on a complete bipartite graph $K_{\alpha[n], \beta[n]}$ with partitions of sizes $$\alpha[n]=\alpha n +o(\sqrt{n}) ~\text{ and }~\beta[n]=\beta n +o(\sqrt{n})$$
and independent uniform edge weights over $[0, 1]$. Then
\begin{equation*}
\lim\limits_{n \rightarrow \infty} E[L_n]=\sum\limits_{i_1,i_2}^\infty \int\limits_0^\infty \zeta_{i_1,i_2}(t)d(t),
\end{equation*}
where $\zeta_{i_1,i_2}(t)$ indexed by $\mathbb{Z}_+^2 \setminus \{(0,0)\}$ is the solution of the following system of equations 
\begin{equation*}
{d \over dt}\zeta_{i_1,i_2}(t)=-(\beta i_1+ \alpha i_2)\zeta_{i_1,i_2}(t)+ {1 \over 2}\sum\limits_{\substack{\ell_1, k_1:~ \ell_1+k_1=i_1,\\ \ell_2, k_2:~ \ell_2+k_2=i_2}} (\ell_1 k_2+\ell_2 k_1)\zeta_{\ell_1,\ell_2}(t)\zeta_{k_1,k_2}(t)
\end{equation*}
with the initial conditions  $\zeta_{i_1,i_2}(0)=\alpha \delta_{1,i_1}\delta_{0,i_2}+\beta \delta_{0,i_1}\delta_{1,i_2}$.
}

\medskip
\noindent
Recall that the above system of equations is the modified Smoluchowski coagulation system (\ref{eqn:Knnt}) of the cross-multiplicative coalescent process with 
the cross-multiplicative kernel introduced in \eqref{biK}. 

\medskip
\noindent
In Theorem \ref{thm:solutionKnnt}, the system of equations in Theorem \ref{thm:mainKnn} is solved. This yields the main result of Sect.~\ref{sec:applications}.

\medskip
\noindent
{\bf Sect.~\ref{sec:applications}, Theorem \ref{thm:main}.}
{\it Let $\alpha, \beta >0$, $\gamma=\alpha/\beta$, and $L_n=L_n(\alpha,\beta)$ be the length of a minimal spanning tree on a complete bipartite graph $K_{\alpha[n], \beta[n]}$ with partitions of sizes $$\alpha[n]=\alpha n +o(\sqrt{n}) ~\text{ and }~\beta[n]=\beta n +o(\sqrt{n})$$
and independent uniform edge weights over $[0, 1]$. Then the limiting mean length of the minimal spanning tree is
$$\lim\limits_{n \rightarrow \infty} E[L_n] =\gamma+{1 \over \gamma}+\sum\limits_{i_1 \geq 1;~i_2 \geq 1}{(i_1+i_2-1)!\over i_1!i_2!}{\gamma^{i_1}i_1^{i_2-1}i_2^{i_1-1}\over (i_1+\gamma i_2)^{i_1+i_2}}.$$
}

\medskip
\noindent
The above result is novel for $\alpha \not= \beta$, when the complete bipartite graph $K_{\alpha[n], \beta[n]}$ is an irregular graph.
For $\alpha=\beta$, Theorem \ref{thm:main} recovers the result of Frieze and McDiarmid \cite{FMcD}, as stated in the following corollary that we also prove in Section  \ref{sec:knn}.
\begin{cor}[Sect.~\ref{sec:applications}, Corollary \ref{cor:FMcD}]
If $\alpha=\beta$, then
$$\lim\limits_{n \rightarrow \infty} E[L_n]=2\zeta(3).$$
\end{cor}

\bigskip
\noindent
The paper is organized as follows.
Sect.~\ref{sec:background} provides the background on coalescent processes, multiplicative coalescent, and gelation.
In Sect.~\ref{sec:knn} the cross-multiplicative coalescent process and the corresponding Marcus-Lushnikov process are analyzed.
Sect.~\ref{sec:applications} gives applications of multiplicative and cross-multiplicative coalescent processes in minimal spanning trees.
Finally, in Section \ref{sec:weakconv}, the weak convergence results of Kurtz \cite{EK,Kurtz81} are applied to Marcus-Lushnikov processes with multiplicative and cross-multiplicative kernels. The paper concludes with a discussion in Section \ref{sec:dscussion}.

\section{Background on coalescent processes and gelation}\label{sec:background}
A general finite coalescent process begins with $n$ singletons (clusters of mass one). The cluster formation is governed by a symmetric collision rate 
kernel $K(i,j)=K(j,i)>0$. Specifically, a pair of clusters with masses (weights) $i$ and $j$ coalesces at the rate $K(i,j)/n$, independently of the other pairs, to form a new cluster of mass $i+j$. The process continues until there is a single cluster of mass $n$. See  \cite{Pitman,Aldous,Bertoin,Berestycki,Evans} and references therein.

\medskip
\noindent
Formally, for a given $n$ consider the space $\mathcal{P}_{[n]}$ of partitions of $[n]=\{1,2,\hdots,n\}$. 
Let $\Pi^{(n)}_0$ be the initial partition in singletons, and $\Pi^{(n)}_t ~~(t \geq 0)$ be a strong Markov process such that 
$\Pi^{(n)}_t$ transitions from partition $\pi \in \mathcal{P}_{[n]}$ to $\pi' \in \mathcal{P}_{[n]}$ with rate $K(i,j)/n$ provided that partition 
$\pi'$ is obtained from partition $\pi$ by merging two clusters of $\pi$ of weights $i$ and $j$.
If $K(i,j) \equiv 1$ for all positive integer masses $i$ and $j$, the process $\Pi^{(n)}_t$ is known as Kingman's $n$-coalescent process. If $K(i,j)=i+j$ the process is called $n$-particle additive coalescent. Finally, if $K(i,j)=ij$ the process is called $n$-particle multiplicative coalescent. The so called Marcus-Lushnikov process 
\begin{equation}\label{eqn:MLprocess}
{\bf ML}_n(t)=\Big(\zeta_1^{[n]}(t),\zeta_2^{[n]}(t),\hdots, \zeta_n^{[n]}(t), 0,0,\hdots \Big)
\end{equation}
is an auxiliary process to the corresponding coalescent process that keeps track of the numbers of clusters in each weight category. Here we let $\zeta_k^{[n]}(t)$ denote the number of clusters of mass $k$ in a multiplicative coalescent process of $n$ particles at time $t\geq 0$. See \cite{Marcus} and \cite{Lushnikov} for the original papers by Marcus and Lushnikov. The latter work considered the gelation phenomenon emerging in some of the Marcus-Lushnikov processes. The Marcus-Lushnikov process does not differentiate between the clusters of the same weight, and therefore does not keep track of the merger history of the $n$-particle coalescent process.

The deterministic dynamics of the limiting fractions  $\zeta_k(t)=\lim\limits_{n \rightarrow \infty} {\zeta_k^{[n]}(t) \over n}$ of clusters of size $k$ is described by the Smoluchowski system of coagulation equations  \cite{Smol16} or by its modified version, the Flory equations. See \cite{Flory,Jeon98,Norris99,FG04,FL09}.
The general system of Smoluchowski coagulation equations with a positive symmetric kernel $K(i,j)$ is the following mean-field approximation of 
coalescent dynamics
\begin{equation}\label{eqn:GeneralSmoluchowski}
{d \over dt}\zeta_j=-\zeta_j\sum_{i=1}^\infty K(i,j) \zeta_i +{1 \over 2}\sum_{i=1}^{j-1}K(i,j-i)\zeta_i\zeta_{j-i} \qquad (k=1,2,\hdots).
\end{equation}

\medskip
\noindent
One of the important questions in the theory of Smoluchowski equations is whether the {\it conservation of mass} property
\begin{equation}\label{eqn:conservation_mass}
\sum_{j=1}^\infty j\zeta_j(t)=\sum_{j=1}^\infty j\zeta_j(0)
\end{equation}
holds for all $t\geq 0$, or if there exists a time $T_{gel}<\infty$ after which the total mass $\sum_{j=1}^\infty j\zeta_j$ begins to dissipate.

\subsection{Gelation}\label{sec:gel}
The phenomenon of loosing total mass after a certain finite time $T_{gel}$ is called {\it gelation}. 
Time $T_{gel}>0$, if finite as in the multiplicative case, is called the {\it gelation time}. The kernel function $K(\cdot,\cdot)$ for which such $T_{gel}<\infty$ is called the {\it gelling kernel}. 
Informally, the gelation time corresponds to formation of a giant cluster called the {\it gel}.
The gelation phenomenon was studied extensively in the coagulations equations literature. 
See \cite{Aldous98, Aldous, vDE86, Jeon99, Lushnikov} and references therein.
Here, we would like to summarize some (but not all) of the concepts and results concerning the gelation phenomenon. 

\medskip
\noindent
Consider a general system \eqref{eqn:GeneralSmoluchowski} of Smoluchowski coagulation equations with a positive symmetric kernel $K(i,j)$,
and given initial conditions $\zeta_j(0)$.
Then, following \cite{McLeod62}, we use the Smoluchowski equations \eqref{eqn:GeneralSmoluchowski} to obtain 
\begin{align*}
{d \over dt}\sum_{j=1}^\infty j\zeta_j = \sum_{j=1}^\infty j{d \over dt}\zeta_j 
& =  -\sum_{i,j=1}^\infty j K(i,j) \zeta_j \zeta_i +{1 \over 2}\sum_{j=1}^\infty \sum_{i=1}^{j-1}\big(i+(j-i)\big)K(i,j-i)\zeta_i\zeta_{j-i}\\
& = -\sum_{i,j=1}^\infty j K(i,j) \zeta_j \zeta_i +{1 \over 2}\sum_{i,j=1}^\infty \big(i+j\big)K(i,j)\zeta_i\zeta_j ~ = 0 
\end{align*}
provided convergence of $\,\sum\limits_{i,j=1}^\infty j K(i,j) \zeta_j \zeta_i$. 
Thus, letting the gelation time be defined via the following critical transition,
\begin{equation}\label{eqn:gelationGeneral01}
T_{gel}:=\inf\Big\{t>0~:~\sum\limits_{i,j=1}^\infty j K(i,j) \zeta_j(t) \zeta_i(t)=\infty \Big\},
\end{equation}
we have ${d \over dt}\sum_{j=1}^\infty j\zeta_j =0$ for $t \in [0,T_{gel})$, which in turn implies
\eqref{eqn:conservation_mass} for $t \in [0,T_{gel})$.

Suppose the hydrodynamic limit $\,\lim\limits_{n \rightarrow \infty} {\zeta_k^{[n]}(t) \over n}=\zeta_k(t)$ is established for the Marcus-Lushnikov process 
with the given kernel $K(i,j)$, where $\zeta_k(t)$ is the solution of a coagulation system of equations.
See \cite{Norris99,FG04,FL09}. Then the definition 
of gelation time in formula \eqref{eqn:gelationGeneral01} is replaced with 
 \begin{equation}\label{eqn:gelationGeneral02}
 T_{gel}:=\inf\Big\{t>0~:~\sum_{j=1}^\infty j\zeta_j(t)<\sum_{j=1}^\infty j\zeta_j(0) \Big\}.
 \end{equation}
While \eqref{eqn:gelationGeneral01} relies on the explosion of higher moments (often, the second moment $\sum_j j^2\zeta_j$) and \eqref{eqn:gelationGeneral02} concerns the behavior of the first moment, the two definitions of gelation are usually equivalent. 

Weak convergence of the Marcus-Lushnikov processes to either a Smoluchowski system or a modified Smoluchowski (Flory) system was explored 
in Jeon \cite{Jeon98}, Norris \cite{Norris99}, Fournier and Giet \cite{FG04}, and in Fournier and Lauren\c{c}ot \cite{FL09}.
Specifically, it was shown in Fournier and Giet \cite{FG04} that if $\lim\limits_{i \rightarrow \infty}{K(i,j) \over i}=\ell(j)>0$, then the hydrodynamic limit of 
the Marcus-Lushnikov process with kernel $K(i,j)$ is the solution to the corresponding modified Smoluchowski (Flory) system.
While, in Jeon \cite{Jeon98} and Norris \cite{Norris99} it was established that $\lim\limits_{i \rightarrow \infty}{K(i,j) \over i}=0$ implies the hydrodynamic limit of 
the Marcus-Lushnikov process is the solution to the Smoluchowski system.

The question whether $T_{gel}<\infty$ is the question of whether the gelation phenomenon occurs in a given system of Smoluchowski equations. The first mathematical proof of gelation was produced in McLeod \cite{McLeod62} for the multiplicative kernel.  Historically, this happened around the time when the formation of a giant cluster 
in the Erd\H{o}s-R\'enyi random graph model (see Sect.~\ref{sec:er}) was proved by P. Erd\H{o}s and A. R\'enyi  \cite{ER60}. The overlap in mathematical formulas obtained in the two papers, \cite{McLeod62} and \cite{ER60}, representing the two different branches of mathematics  is quite remarkable. The work of finding a mathematically solid proof of gelation phenomenon for other conjectured gelling kernels began fifteen years later with the work of Lushnikov \cite{Lushnikov}. It continued with publications of Ziff \cite{Ziff}, Ernst et al. \cite{EHZ}, van Dongen and Ernst \cite{vDE86}, Jeon \cite{Jeon98,Jeon99}, Escobedo et al. \cite{EMP02}, and many other mathematicians and mathematical physicists. 
In Spouge \cite{Spouge83}, gelation is demonstrated numerically for the general bilinear kernel $K(i,j)=A+B(i+j)+Cij$. Aldous \cite{Aldous98} proved gelation for $~K(i,j)={2(ij)^\gamma \over (i+j)^\gamma-i^\gamma-j^\gamma}$, where $\gamma \in (1,2)$. While $\gamma=2$ corresponds to the multiplicative kernel for which, as we know, gelation also occurs. Jeon \cite{Jeon99} proved that complete and instantaneous gelation occurs if $K(i,j) \geq ij\psi(i,j)$, where
$\psi(i,j)$ is a function increasing in both variables, $i$ and $j$, such that 
$~\sum\limits_{j=1}^\infty {1 \over j \psi(i,j)} < \infty~$ for all $i$.
This includes $K(i,j)=(ij)^\alpha$, $\alpha>1$, as a primary example. 
Finally, Rezakhanlou \cite{Rezakhanlou13} lists sufficient conditions for each of the three modes of gelation, i.e., simple, instantaneous, and complete.

\subsection{Multiplicative coalescent and its coagulation equations}\label{sec:mult}
Consider the Marcus-Lushnikov process ${\bf ML}_n(t)$ corresponding to the multiplicative coalescent process that begins with $n$ singletons, i.e., $K(i,j)=ij$ and ${\bf ML}_n(0)=(n,0,0,\hdots)$. In this case, the Smoluchowski coagulation equations \eqref{eqn:GeneralSmoluchowski} are stated as follows
\begin{equation}\label{eqn:multSmol}
{d \over dt}\zeta_k=-k\zeta_k\sum_{j=1}^\infty j\zeta_j+{1 \over 2}\sum_{j=1}^{k-1}j(k-j)\zeta_j\zeta_{k-j} \qquad (k=1,2,\hdots) ~\text{ with } \zeta_k(0)=\delta_{1,k}.
\end{equation}
The dynamics of the total mass $\sum_{j=1}^\infty j\zeta_j$ begins with $\sum_{j=1}^\infty j\zeta_j(0)=1$, and following McLeod \cite{McLeod62}, we have
\begin{align}\label{eqn:diff_mass}
{d \over dt}\sum_{j=1}^\infty j\zeta_j = \sum_{j=1}^\infty j{d \over dt}\zeta_j
& =  -\sum_{i,j=1}^\infty ij^2 \zeta_j \zeta_i +{1 \over 2}\sum_{j=1}^\infty \sum_{i=1}^{j-1}\big(i+(j-i)\big)i(j-i)\zeta_i\zeta_{j-i} \nonumber \\
& =  -\sum_{i,j=1}^\infty ij^2 \zeta_j \zeta_i +{1 \over 2}\sum_{i,j=1}^\infty \big(i+j\big)ij\zeta_i\zeta_j =  0 
\end{align}
provided convergence of $\,\sum\limits_{j=1}^\infty j^2 \zeta_j(t)$. Thus, there exists a time $T_{gel} \in (0,\infty]$, defined as
the time such that the following conservation of mass formula \eqref{eqn:conservation_mass} is satisfied up to $T_{gel}$, i.e., $\,\sum\limits_{j=1}^\infty j \zeta_j(t)=1$.

\bigskip
\noindent
Next, we want to modify the system \eqref{eqn:multSmol} since the decay rate of $k\zeta_k\sum_{j=1}^\infty j\zeta_j$ in \eqref{eqn:multSmol} does not include the gravitation of clusters of size $k$ towards all the rest of the clusters. The problem is that a cluster of an exceptionally large size, say $\epsilon n$, in a single quantity will not be accounted for in \eqref{eqn:multSmol}. Yet, such a large cluster has to contribute $\epsilon k\zeta_k$ to the decay rate of $\zeta_k$. Replacing the decay rate $k\zeta_k\sum_{j=1}^\infty j\zeta_j$ with $k\zeta_k$ would resolve this issue as the new rate accounts for the gravitation of a cluster of a given size $k$ towards all clusters in the Marcus-Lushnikov process, whose weights add up to $n-k= n\big(1+O(n^{-1})\big)$. 

Thus, as it was done in \cite{McLeod62}, the Smoluchowski coagulation system \eqref{eqn:multSmol} reduces to
\begin{equation}\label{eqn:Flory}
{d \over dt}\zeta_k=-k\zeta_k +{1 \over 2}\sum_{j=1}^{k-1}j(k-j)\zeta_j\zeta_{k-j} \qquad (k=1,2,\hdots) ~\text{ with } \zeta_k(0)=\delta_{1,k}
\end{equation}
which is solved explicitly:
\begin{equation}\label{eqn:solutionFlory}
\zeta_k(t)={k^{k-2} t^{k-1} \over k!} e^{-kt} \quad \text{ for } t \geq 0.
\end{equation}
Now, since \eqref{eqn:Flory} is obtained from \eqref{eqn:multSmol} by substituting $\sum_{j=1}^\infty j\zeta_j(t)=1$, 
the solutions of systems \eqref{eqn:multSmol} and \eqref{eqn:Flory} coincide as long as the conservation 
of mass holds, i.e., for all $t \in [0,T_{gel})$.

\medskip
\noindent
The above system of equations \eqref{eqn:Flory} is called {\it modified Smoluchowski system} (see Fournier and Giet \cite{FG04}),
and is also known as the {\it Flory coagulation system} of equations (named after Flory \cite{Flory}).
See also \cite{FL09} for  the analysis of a broad class of Smoluchowski and Flory systems, where the kernel $K(i,j)=i^\alpha j +ij^\alpha$, $\alpha \in (0,1]$.

\medskip
\noindent
Importantly, it is well known that the hydrodynamic limit of  the Marcus-Lushnikov process $~\lim\limits_{n \rightarrow \infty} {\zeta_k^{[n]}(t) \over n}=\zeta_k(t)$ 
is the solution \eqref{eqn:solutionFlory} of the modified Smoluchowski system \eqref{eqn:Flory} for the multiplicative kernel.

\bigskip
\noindent
For the multiplicative kernel $K(i,j)=ij$ the {\it gelation time} $T_{gel}$ is finite \cite{McLeod62}, and therefore, $K(i,j)=ij$ is a {\it gelling kernel}. 
Indeed, applying Stirling's approximation, we have the series
\begin{equation}\label{eqn:multStirling}
\sum\limits_{j=1}^\infty j^2 \zeta_j(s)=\sum\limits_{j=1}^\infty {j^j s^{j-1} \over j!} e^{-js}={1 \over \sqrt{2\pi}s}\sum\limits_{j=1}^\infty {e^{-j(s-\ln{s}-1)} \over \sqrt{j}} \big(1+o(j^{-1})\big)
\end{equation}
converging for all positive $s \not= 1$ and diverging for $s=1$. Hence, the second moment $\,\sum\limits_{j=1}^\infty j^2 \zeta_j(t)$ in \eqref{eqn:diff_mass} converges for $t \in [0,1)$ and diverges for $t=1$, i.e.,
$$T_{gel}:=\inf\Big\{t>0~:~\sum\limits_{j=1}^\infty j^2 \zeta_j(t)=\infty \Big\}=1.$$
Thus, the conservation of mass \eqref{eqn:conservation_mass} is satisfied until the explosion of the second moment at $t=1$.

Moreover, for $t>0$, consider
\begin{equation}\label{defx}
x(t):=\min\{x>0~:~xe^{-x}=te^{-t}\},
\end{equation}
i.e., $x(t)$ is the unique $x\in (0,1]$ such that $xe^{-x}=te^{-t}$. Obviously, $x(t)=t$ for $0< t \leq 1$.

We know that for $t<1$, $\sum\limits_{k=1}^\infty k\zeta_k(t)=1$ implies $\sum\limits_{k=1}^\infty{k^{k-1} (te^{-t})^k \over k!}=t$.
Thus, for all $t>0$, $\sum\limits_{k=1}^\infty{k^{k-1} (te^{-t})^k \over k!}=x(t)$. 
Hence, the first moment of the solutions $\zeta_k$ of \eqref{eqn:Flory} equals
\begin{equation}\label{eqn:xovertMcLeod}
\sum\limits_{k=1}^\infty k\zeta_k(t)={1 \over t}\sum\limits_{k=1}^\infty{k^{k-1} (te^{-t})^k \over k!}={x(t) \over t}.
\end{equation}
and therefore, the solutions of \eqref{eqn:Flory} satisfy
$$\begin{cases}
      \sum\limits_{k=1}^\infty k\zeta_k(t)=1 & \text{ if } t \leq 1, \\
      \sum\limits_{k=1}^\infty k\zeta_k(t)<1 & \text{ if } t > 1. 
\end{cases}$$
Compare this to the  mass conservation property in the Marcus-Lushnikov  processes: $$\sum\limits_{k=1}^\infty k {\zeta_k^{[n]}(t) \over n}=1 \quad \forall t\geq 0.$$
The above can be restated using the following alternative definition of the gelation time
$$T_{gel}:=\inf\Big\{t>0~:~\sum\limits_{j=1}^\infty j \zeta_j(t)<1 \Big\}=1,$$
where $\zeta_k$ are the solutions of \eqref{eqn:Flory}.

\section{The cross-multiplicative coalescent process}\label{sec:knn}
In this section we analyze the cross-multiplicative coalescent process.
We are motivated by the need to extend the theory and applications of coalescent processes to the particle system, described in the introduction, where not all pairs of particles interact with each other. Specifically, each particle may bond only with the particles of the opposite type. 

For given $\alpha, \beta >0$, we consider two integer valued functions, $\alpha[n]=\alpha n +o(\sqrt{n})$ and $\beta[n]=\beta n +o(\sqrt{n})$.
We will examine a coalescent process where the weight of each cluster is a two-dimensional (weight) vector ${\bf i}=\left[\!\!\begin{array}{c}i_1 \\i_2\end{array}\!\!\right]$.
Here, $i_1,i_2 \geq 0$ and $i_1+i_2>0$. 
Each cluster of weight ${\bf i}$ consists of $i_1$ particles of type $A$ and $i_2$ particles of type $B$.  
The coalescent process begins with $\alpha[n]+\beta[n]$ singletons, of which $\alpha[n]$ singletons are of weight $\left[\!\!\begin{array}{c}1 \\0\end{array}\!\!\right]$ and the other $\beta[n]$ singletons are of weight $\left[\!\!\begin{array}{c}0 \\1\end{array}\!\!\right]$. 
The coalescence kernel is defined by 
\begin{equation}\label{biK} 
K({\bf i},{\bf j}):=i_1j_2+i_2j_1
\end{equation}
for any pair of clusters with weight vectors ${\bf i}=\left[\!\!\begin{array}{c}i_1 \\i_2\end{array}\!\!\right]$ and ${\bf j}=\left[\!\!\begin{array}{c}j_1 \\j_2\end{array}\!\!\right]$.
Each pair of clusters of respective weights ${\bf i}$ and ${\bf j}$ would coalesce into a cluster of weight ${\bf i+j}$ with rate $K({\bf i},{\bf j})/n$. The last merger will create a cluster of weight $\left[\!\!\begin{array}{c}\alpha[n] \\ \beta[n]\end{array}\!\!\right]$. We will call this {\it cross-multiplicative coalescent process}, and 
the kernel $K({\bf i},{\bf j})$ defined in \eqref{biK} will be referred to as the {\it cross-multiplicative kernel}.

\subsection{Coagulation equations}
Consider the Marcus-Lushnikov process ${\bf ML}_n(t)$ that keeps track of cluster counts in the above defined cross-multiplicative coalescent process that begins with $\alpha[n]+\beta[n]$ singletons of the two types, $\alpha[n]$ of weight $\left[\!\!\begin{array}{c}1 \\0\end{array}\!\!\right]$ and $\beta[n]$ of weight $\left[\!\!\begin{array}{c}0 \\1\end{array}\!\!\right]$. Specifically, let $\zeta_{i_1,i_2}^{[n]}(t)$ denote the number of components of weight ${\bf i}=\left[\!\!\begin{array}{c}i_1 \\i_2\end{array}\!\!\right]$ at time $t$. Then ${\bf ML}_n(t)$ is the process with coordinates  $\zeta_{i_1,i_2}^{[n]}(t)$, i.e.
$${\bf ML}_n(t)=\Big(\zeta_{i_1,i_2}^{[n]}(t) \Big)_{i_1,i_2}$$
with the starting values $\zeta_{1,0}^{[n]}(0)=\alpha[n]$, $\zeta_{0,1}^{[n]}(0)=\beta[n]$, and  $\zeta_{i_1,i_2}^{[n]}(0)=0$ for all other pairs $(i_1,i_2)$.

\medskip
\noindent
The Smoluchowski coagulation equations for the Marcus-Lushnikov process ${\bf ML}_n(t)$ with cross-multiplicative kernel are written as follows:
\begin{align}\label{eqn:Knn}
{d \over dt}\zeta_{i_1,i_2}(t)=-\zeta_{i_1,i_2}(t)\sum\limits_{j_1,j_2} (i_1j_2+i_2j_1)\zeta_{j_1,j_2}(t)+ {1 \over 2}\sum\limits_{\substack{\ell_1, k_1:~ \ell_1+k_1=i_1,\\ \ell_2, k_2:~ \ell_2+k_2=i_2}} (\ell_1 k_2+\ell_2 k_1)\zeta_{\ell_1,\ell_2}(t)\zeta_{k_1,k_2}(t) \nonumber\\
\end{align}
with the initial conditions $~\zeta_{i_1,i_2}(0)=\alpha \delta_{1,i_1}\delta_{0,i_2}+\beta \delta_{0,i_1}\delta_{1,i_2}$.

\medskip
\noindent
A reduced system of differential equations corresponding to the above Smoluchowski coagulation equations \eqref{eqn:Knn} will be given in \eqref{eqn:Knnt}. It will take into account the mass conservation property of the above Marcus-Lushnikov process ${\bf ML}_n(t)$, and therefore will represent the smaller cluster dynamics over the whole time interval $[0,\infty)$.

\bigskip
\noindent
First, we notice that here the initial total mass is $\sum\limits_{i_1,i_2} (i_1+i_2)\zeta_{i_1,i_2}(0)=\alpha+\beta$. Moreover, the initial total `left mass' (type $A$) is 
$\sum\limits_{i_1,i_2} i_1\zeta_{i_1,i_2}(0)=\alpha$ and the initial total `right mass' (type $B$) is $ \sum\limits_{i_1,i_2} i_2\zeta_{i_1,i_2}(0)=\beta$.

Next, we consider the rate of change for the total left mass and the total right mass, and use \eqref{eqn:Knn} to obtain
\begin{eqnarray}\label{eqn:firstmoment_i1}
{d \over dt}\sum\limits_{i_1,i_2} i_1 \zeta_{i_1,i_2}(t) & = & -\sum\limits_{i_1,i_2,j_1,j_2} i_1(i_1j_2+i_2j_1)\zeta_{i_1,i_2}(t) \zeta_{j_1,j_2}(t) \nonumber \\
& & \quad + {1 \over 2}\sum\limits_{\ell_1, k_1,\ell_2, k_2} (\ell_1+k_1)(\ell_1 k_2+\ell_2 k_1)\zeta_{\ell_1,\ell_2}(t)\zeta_{k_1,k_2}(t) ~=0
\end{eqnarray}
and
\begin{eqnarray}\label{eqn:firstmoment_i2}
{d \over dt}\sum\limits_{i_1,i_2} i_2\zeta_{i_1,i_2}(t) & = & -\sum\limits_{i_1,i_2,j_1,j_2} i_2(i_1j_2+i_2j_1)\zeta_{i_1,i_2}(t) \zeta_{j_1,j_2}(t) \nonumber \\
& & \quad + {1 \over 2}\sum\limits_{\ell_1, k_1,\ell_2, k_2} (\ell_2+k_2)(\ell_1 k_2+\ell_2 k_1)\zeta_{\ell_1,\ell_2}(t)\zeta_{k_1,k_2}(t)~=0
\end{eqnarray}
whenever $\sum\limits_{i_1,i_2} (i_1+i_2)^2\zeta_{i_1,i_2}(t)$ converges. 

\bigskip
\noindent
Here, for $t<T_{gel}$, $\sum\limits_{j_1,j_2} j_1\zeta_{j_1,j_2}(t)=\alpha$ and \mbox{$\sum\limits_{j_1,j_2} j_2\zeta_{j_1,j_2}(t)=\beta$}. Therefore, for any $i_1$ and $i_2$,
$~\sum\limits_{j_1,j_2} (i_1j_2+i_2j_1)\zeta_{j_1,j_2}(t)=\beta i_1+\alpha i_2.$
Thus we can consider the following {\it modified Smoluchowski coagulation system of equations}:
\begin{equation}\label{eqn:Knnt}
{d \over dt}\zeta_{i_1,i_2}(t)=-(\beta i_1+ \alpha i_2)\zeta_{i_1,i_2}(t)+ {1 \over 2}\sum\limits_{\substack{\ell_1, k_1:~ \ell_1+k_1=i_1,\\ \ell_2, k_2:~ \ell_2+k_2=i_2}} (\ell_1 k_2+\ell_2 k_1)\zeta_{\ell_1,\ell_2}(t)\zeta_{k_1,k_2}(t)
\end{equation}
with the initial conditions $~\zeta_{i_1,i_2}(0)=\alpha \delta_{1,i_1}\delta_{0,i_2}+\beta \delta_{0,i_1}\delta_{1,i_2}$. Once again, the solutions of Smoluchowski coagulation system \eqref{eqn:Knn} and the above modified Smoluchowski coagulation system (\ref{eqn:Knnt}) will match up until $T_{gel}$. Consequently, the solution \eqref{eqn:zetasol} of the modified Smoluchowski system of equations \eqref{eqn:Knnt} is used in Sect.~\ref{sec:gelKnn} for establishing the finiteness of the gelation time and for finding its value, $T_{gel}$.

\medskip
\noindent
In Sect.~\ref{sec:hydcross} we establish that the solution to the above modified Smoluchowski coagulation system \eqref{eqn:Knnt} is 
the hydrodynamic limit of the Marcus-Lushnikov process ${\bf ML}_n(t)$ with cross-multiplicative kernel. 
Specifically, in equation \eqref{crossAS}, it is shown that
$$\lim\limits_{n \to \infty} \sup\limits_{s \in [0,T]} \left|n^{-1}\zeta_{i_1,i_2}^{[n]}(s)-\zeta_{i_1,i_2}(s)\right|=0 \qquad \text{ a.s.}$$
for any given $T>0$ and all $i_1, i_2 \geq 1$, where $\zeta_{i_1,i_2}(t)$ solves the modified Smoluchowski coagulation system \eqref{eqn:Knnt}.

\subsection{The unique solution of the modified Smoluchowski coagulation system} \label{sub:sol}
Next, we want to find the solution $\zeta_{i_1,i_2}(t)$ of the reduced system \eqref{eqn:Knnt} for all $t \geq 0$.
Here we observe that $\zeta_{1,0}(t)=\alpha e^{-\beta t}$ and $\zeta_{0,1}(t)=\beta e^{-\alpha t}$, and extend the approach of McLeod \cite{McLeod62} by considering the solutions of the following form
\begin{equation}\label{eqn:sol}
\zeta_{i_1,i_2}(t)=\alpha^{i_1}\beta^{i_2}S_{i_1,i_2}e^{-(\beta i_1+ \alpha i_2)t}t^{i_1+i_2-1}
\end{equation}
and plugging them into  equation (\ref{eqn:Knnt}). After cancelations, we arrive with the following recursion
\begin{equation}\label{eqn:recS}
(i_1+i_2-1)S_{i_1,i_2}={1 \over 2}\sum\limits_{\substack{\ell_1, k_1:~ \ell_1+k_1=i_1,\\ \ell_2, k_2:~ \ell_2+k_2=i_2}} (\ell_1 k_2+\ell_2 k_1)S_{\ell_1,\ell_2}S_{k_1,k_2}
\end{equation}
with initial conditions $S_{i,0}=S_{0,i}=\delta_{1,i}$, and $S_{i_1,i_2}=S_{i_2,i_1}$.

\medskip
\noindent
In the next lemma we state the explicit solution to the recursion relation (\ref{eqn:recS}) which we prove using a generalization of Abel's binomial theorem.
\begin{lemma}\label{lem:S}
The system of equations (\ref{eqn:recS}) with the initial conditions $S_{i,0}=S_{0,i}=\delta_{1,i}$ has the following unique solution
\begin{equation}\label{eqn:solS}
S_{i_1,i_2}={i_1^{i_2-1}i_2^{i_1-1} \over i_1!i_2!}.
\end{equation}
\end{lemma}

\medskip
\noindent
Note that the numerator $i_1^{i_2-1}i_2^{i_1-1}$ in \eqref{eqn:solS} is the total number of spanning trees in $K_{i_1,i_2}$. See \cite{Austin60}.

\begin{proof}
In Theorem 1.1(3) of \cite{HL}, F. Huang and B. Liu generalize Abel's binomial theorem as follows:
\begin{align}\label{eqn:HL}
\sum\limits_{k_1=0}^{i_1} \sum\limits_{k_2=0}^{i_2} & \binom{i_1}{k_1}\binom{i_2}{k_2}\big(v+zi_1-zk_1\big)^{k_2-1} \big(-z(i_1-k_1)\big)^{i_2-k_2}(-zk_2)^{k_1} (u+zk_2)^{i_1-k_1-1}  \nonumber \\
&= {[uv-i_1i_2z^2]u^{i_1-1}v^{i_2-1} \over (v+i_1z)(u+i_2z)}
\end{align}
Then, we use (\ref{eqn:HL}) with $z=-1$ to confirm our candidate solution satisfies (\ref{eqn:recS}) by plugging it into the right hand side of (\ref{eqn:recS}) as follows.
\begin{align*}
{1 \over 2}&\sum\limits_{\substack{\ell_1, k_1:~ \ell_1+k_1=i_1,\\ \ell_2, k_2:~ \ell_2+k_2=i_2}}  (\ell_1 k_2+\ell_2 k_1)S_{\ell_1,\ell_2}S_{k_1,k_2} 
\quad =\sum\limits_{\substack{\ell_1, k_1:~ \ell_1+k_1=i_1,\\ \ell_2, k_2:~ \ell_2+k_2=i_2}} \ell_1 k_2 S_{\ell_1,\ell_2}S_{k_1,k_2} \\
&=\sum\limits_{\substack{\ell_1, k_1:~ \ell_1+k_1=i_1, \\ \ell_2, k_2:~ \ell_2+k_2=i_2, \\ (k_1,k_2), (\ell_1,\ell_2)\not=(0,0)}} {\ell_1^{\ell_2} \ell_2^{\ell_1-1} k_1^{k_2-1} k_2^{k_1} \over \ell_1!\ell_2!k_1!k_2!} 
\end{align*}
\begin{align*}
\quad &={1 \over i_1!i_2!} \sum\limits_{\substack{k_1:~ 0 \leq k_1 \leq i_1, \\ k_2:~ 0 \leq k_2 \leq i_2, \\ (k_1,k_2)\not=(0,0), (i_1,i_2)}} \binom{i_1}{k_1}\binom{i_2}{k_2}k_1^{k_2-1} (i_1-k_1)^{i_2-k_2}k_2^{k_1} (i_2-k_2)^{i_1-k_1-1} \\
&= {1 \over i_1!i_2!} \lim\limits_{\substack{v  \rightarrow i_1,\\u \rightarrow i_2}} \Big\{\sum\limits_{k_1=0}^{i_1}  \sum\limits_{k_2=0}^{i_2}  \binom{i_1}{k_1}\binom{i_2}{k_2}\big(v-i_1+k_1\big)^{k_2-1} (i_1-k_1)^{i_2-k_2}k_2^{k_1} (u-k_2)^{i_1-k_1-1}\\
& \qquad \qquad \qquad \qquad \qquad \qquad \qquad \qquad \qquad \qquad \qquad \qquad \qquad -{i_1^{i_2}u^{i_1-1} \over v-i_1}-{i_2^{i_1}v^{i_2-1} \over u-i_2} \Big\}
\end{align*}
\begin{align*}
\quad &={1 \over i_1!i_2!} \lim\limits_{\substack{v  \rightarrow i_1,\\u \rightarrow i_2}}  \Big\{ {[uv-i_1i_2]u^{i_1-1}v^{i_2-1} \over (v-i_1)(u-i_2)}-{i_1^{i_2}u^{i_1-1} \over v-i_1}-{i_2^{i_1}v^{i_2-1} \over u-i_2} \Big\}\\
&={1 \over i_1!i_2!} \lim\limits_{\substack{v  \rightarrow i_1,\\u \rightarrow i_2}}  \Big\{ {i_1v^{i_2-1}u^{i_1-1} \over v-i_1}+{u^{i_1}v^{i_2-1} \over u-i_2}-{i_1^{i_2}u^{i_1-1} \over v-i_1}-{i_2^{i_1}v^{i_2-1} \over u-i_2} \Big\}
\end{align*}
Hence,
\begin{align*}
{1 \over 2}\sum\limits_{\substack{\ell_1, k_1:~ \ell_1+k_1=i_1,\\ \ell_2, k_2:~ \ell_2+k_2=i_2}}  (\ell_1 k_2+\ell_2 k_1)S_{\ell_1,\ell_2}S_{k_1,k_2} &={1 \over i_1!i_2!} \lim\limits_{\substack{v  \rightarrow i_1,\\u \rightarrow i_2}}  \Big\{ i_1u^{i_1-1}{v^{i_2-1}-i_1^{i_2-1} \over v-i_1}+v^{i_2-1} {u^{i_1} -i_2^{i_1} \over u-i_2} \Big\}\\
&={1 \over i_1!i_2!} \big((i_2-1)\cdot i_1^{i_2-1}i_2^{i_1-1}+i_1\cdot i_1^{i_2-1}i_2^{i_1-1}\big)\\
&= (i_1+i_2-1){i_1^{i_2-1}i_2^{i_1-1} \over i_1!i_2!} \\
&= (i_1+i_2-1)S_{i_1,i_2} 
\end{align*}
thus completing the proof.
\end{proof}

\noindent
The solution of equations (\ref{eqn:Knnt}) follows from (\ref{eqn:sol}) and Lemma \ref{lem:S}.
\begin{thm}\label{thm:solutionKnnt}
The modified Smoluchowski coagulation system of equations \eqref{eqn:Knnt} with the initial conditions $~\zeta_{i_1,i_2}(0)=\alpha \delta_{1,i_1}\delta_{0,i_2}+\beta \delta_{0,i_1}\delta_{1,i_2}$ has the unique solution
\begin{equation}\label{eqn:zetasol}
\zeta_{i_1,i_2}(t)={i_1^{i_2-1}i_2^{i_1-1}\alpha^{i_1}\beta^{i_2} \over i_1!i_2!}e^{-(\beta i_1+ \alpha i_2)t}t^{i_1+i_2-1}.
\end{equation}
\end{thm}

\bigskip

\subsection{Gelation in the cross-multiplicative coalescent process}\label{sec:gelKnn}
Next, we prove the finiteness of the gelation time that, following the approach in \eqref{eqn:gelationGeneral02}, we define as 
$$T_{gel}:=\inf\Big\{t>0~:~\sum\limits_{i_1,i_2} (i_1+i_2)\zeta_{i_1,i_2}(t)<\alpha+\beta \Big\}.$$

\noindent
Let 
\begin{equation}\label{eqn:suv}
s(u,v):=\sum\limits_{(i_1,i_2)\in \mathbb{Z}_+^2 \setminus \{(0,0)\}} S_{i_1,i_2} u^{i_1}v^{i_2} =\sum\limits_{(i_1,i_2)\in \mathbb{Z}_+^2 \setminus \{(0,0)\}} {i_1^{i_2-1}i_2^{i_1-1} \over i_1!i_2!} u^{i_1}v^{i_2}
\end{equation}
be the generating function of $S_{i_1,i_2}$.
The recurrence relation \eqref{eqn:recS} implies
\begin{equation}\label{eqn:recSode}
u {\partial s \over \partial u}+v{\partial s \over \partial v} -s=u v {\partial s \over \partial u}{\partial s \over \partial v}
\end{equation}
with the initial conditions ${\partial \over \partial u}s(0,1)={\partial \over \partial v}s(1,0)=1$.

\begin{lemma}\label{lem:GelationTimeKanbn}
Consider the Smoluchowski coagulation system of equations  \eqref{eqn:Knn} with the initial conditions $~\zeta_{i_1,i_2}(0)=\alpha \delta_{1,i_1}\delta_{0,i_2}+\beta \delta_{0,i_1}\delta_{1,i_2}$. Then, a phase transition occurs at
$$\inf\Big\{t>0~:~\sum\limits_{i_1,i_2} (i_1+i_2)^2\zeta_{i_1,i_2}(t)=\infty \Big\}={1 \over \sqrt{\alpha \beta}}.$$ 
\end{lemma}

\noindent
Note that the above phase transition corresponds to the gelation times as defined in \eqref{eqn:gelationGeneral01}.
\begin{proof}
We will follow the approach in \cite{Aldous98,Aldous} and \cite{Ziff}. Let 
$$E(t):=\sum\limits_{i_1,i_2} i_1^2\zeta_{i_1,i_2}(t), \quad F(t):=\sum\limits_{i_1,i_2} i_1 i_2\zeta_{i_1,i_2}(t), 
\quad \text{ and }\quad G(t):=\sum\limits_{i_1,i_2} i_2^2\zeta_{i_1,i_2}(t)$$
denote all the second order moments of $\zeta_{i_1,i_2}(t)$.
By differentiating as in \eqref{eqn:firstmoment_i1} and \eqref{eqn:firstmoment_i2}, we obtain
$${d \over dt}E(t)=2E(t)F(t), \quad {d \over dt}F(t)=E(t)G(t)+F^2(t), \quad \text{ and }\quad {d \over dt}G(t)=2G(t)F(t)$$
with the initial conditions $E(0)=\alpha$, $F(0)=0$, and $G(0)=\beta$.
We require the finiteness of all third order moments when deriving the above differential equations for the second order moments.
Here the first and the third equations yield $E(t)={\alpha \over \beta}G(t)$. 
Hence the system reduces to
$${d \over dt}E(t)=2E(t)F(t)  \quad \text{ and }\quad {d \over dt}F(t)={\beta \over \alpha} E^2(t)+F^2(t),$$
and therefore,
$${d \over dt}\left(\sqrt{\beta \over \alpha}E(t)+F(t)\right)=\left(\sqrt{\beta \over \alpha}E(t)+F(t)\right)^2.$$
Thus,
$$\sqrt{\beta \over \alpha}E(t)+F(t)={1 \over {1 \over \sqrt{\alpha\beta}}-t}$$
for $t<{1 \over \sqrt{\alpha \beta}}$. The statement of the lemma follows from the fact that all functions obtained as all-order partial derivatives of the series \eqref{eqn:suv} 
have the same domain of convergence.
\end{proof}

\bigskip
\noindent
For given $\alpha,\beta >0$ and $t > 0$, define
\begin{equation}\label{defxy}
\big(x(t),y(t)\big):=\min\big\{(x,y)~:~xe^{-y}=\alpha te^{-\beta t},~ye^{-x}=\beta te^{-\alpha t}\big\},
\end{equation}
where the minimum in one coordinate implies the minimum in another as $x$ and $y$ solving
\begin{equation}\label{eqn:xey}
xe^{-y}=u \quad \text{ and } \quad ye^{-x}=v
\end{equation}
for $u,v>0$ are mutually monotonous, e.g. $x=ue^y$.

\begin{prop}\label{prop:xyvsone}
For given $u,v>0$, consider the system \eqref{eqn:xey}. Then, the following holds.
\begin{itemize}
  \item[(i)] Depending on the values of $u$ and $v$, the system \eqref{eqn:xey}  may have one, two, or no solutions.
  \item[(ii)] If the system \eqref{eqn:xey} has a unique solution, then the solution should satisfy $xy=1$.
  \item[(iii)] If the system \eqref{eqn:xey} has two solutions, then the smallest solution should satisfy $xy < 1$, and the largest solution should satisfy $xy>1$.
\end{itemize}
\end{prop}
\begin{proof}
First, observe that $x=ue^{ve^x}$, and statement (i) follows from the convexity of $ue^{ve^x}$.

\medskip
\noindent
Next, suppose $(x_1,y_1)$ and $(x_2,y_2)$ are two solutions of \eqref{eqn:xey}. Then
\begin{equation}\label{eqn:x1y1x2y2}
x_1e^{-y_1}=x_2 e^{-y_2}\quad \text{ and } \quad y_1e^{-x_1}=y_2e^{-x_2}.
\end{equation}
We express $x_2$ in terms of $x_1$ and $y_1$, obtaining $x_2=x_1e^{y_1(e^{x_2-x_1}-1)}$.
We notice that there is a unique solution $x=x_1$ of
\begin{equation}\label{eqn:xviax1y1}
x=x_1e^{y_1(e^{x-x_1}-1)}
\end{equation}
if and only if $x=x_1$ is the root of $1=x y_1e^{x-x_1}$. This yields statement (ii).

Finally, suppose there are two distinct solutions of \eqref{eqn:xviax1y1}, and $x_2>x_1$ (implying $y_2>y_1$).
Then, there is a local extremum $x \in (x_1,x_2)$, satisfying
$1=x y_1e^{x-x_1}>x_1y_1$.

Similarly, suppose there are two distinct solutions of \eqref{eqn:xviax1y1}, and $x_2<x_1$ (implying $y_2<y_1$).
Then, there is a local extremum $x \in (x_2,x_1)$, satisfying
$1=x y_1e^{x-x_1}<x_1y_1$. Hence, statement (iii).
\end{proof}

Prop.~\ref{prop:xyvsone} immediately yields the following corollary concerning the functions defined in \eqref{defxy}.
\begin{cor}\label{cor:xy1}
For given $\alpha,\beta >0$ and $t > 0$, consider $\big(x(t),y(t)\big)$ as defined in \eqref{defxy}. Then, 
\begin{itemize}
  \item $x(t)=\alpha t$ and $y(t)=\beta t$ for all $t \leq {1 \over \sqrt{\alpha \beta}}$;
  \item $x(t)<\alpha t$ and $y(t)<\beta t$ for all $t > {1 \over \sqrt{\alpha \beta}}$.
  \end{itemize}
\end{cor}

\medskip
\noindent
Next, we derive an analogue to the equation \eqref{eqn:xovertMcLeod} that was proved in McLeod \cite{McLeod62} for the regular multiplicative kernel. 
\begin{lemma}\label{lem:novoMcLeod}
Consider the solution $\zeta_{i_1,i_2}(t)$ of the modified Smoluchowski coagulation system of equations \eqref{eqn:Knnt} with the initial conditions $~\zeta_{i_1,i_2}(0)=\alpha \delta_{1,i_1}\delta_{0,i_2}+\beta \delta_{0,i_1}\delta_{1,i_2}$, as found in Theorem \ref{thm:solutionKnnt}. Then,
$$\sum\limits_{i_1,i_2} i_1\zeta_{i_1,i_2}(t)=\sum\limits_{i_1,i_2} {i_1^{i_2}i_2^{i_1-1}\alpha^{i_1}\beta^{i_2} \over i_1!i_2!}e^{-(\beta i_1+ \alpha i_2)t}t^{i_1+i_2-1}={x(t) \over t}$$
and
$$\sum\limits_{i_1,i_2} i_2\zeta_{i_1,i_2}(t)=\sum\limits_{i_1,i_2} {i_1^{i_2-1}i_2^{i_1}\alpha^{i_1}\beta^{i_2} \over i_1!i_2!}e^{-(\beta i_1+ \alpha i_2)t}t^{i_1+i_2-1}={y(t) \over t},$$
where $x(t)$ and $y(t)$ are the functions defined in \eqref{defxy}.
\end{lemma}
\noindent
\begin{proof}
By \eqref{eqn:firstmoment_i1}, \eqref{eqn:firstmoment_i2}, and Lemma~\ref{lem:GelationTimeKanbn}, we have
\begin{equation}\label{eqn:atbtdsuv}
\alpha t=\alpha t e^{-\beta t}{\partial s \over \partial u}\big(\alpha t e^{-\beta t},\beta t e^{-\alpha t}\big)~~\text{ and }~~\beta t=\beta t e^{-\alpha t}{\partial s \over \partial v}\big(\alpha t e^{-\beta t},\beta t e^{-\alpha t}\big) 
\end{equation}
$\forall \alpha,\beta >0$ and $\forall t < {1 \over \sqrt{\alpha \beta}}$.
Now, since the function $s(u,v)$ does not depend on the values of $\alpha$ and $\beta$, \eqref{eqn:atbtdsuv} implies
$$x=xe^{-y}{\partial s \over \partial u}\big(xe^{-y},ye^{-x}\big)  \quad \text{ and } \quad y=ye^{-x}{\partial s \over \partial v}\big(xe^{-y},ye^{-x}\big)$$
for all $xy<1$. Hence, by Prop.~\ref{prop:xyvsone} , we have
$$x=u{\partial s \over \partial u}(u,v)  \quad \text{ and } \quad y=v{\partial s \over \partial v}(u,v)$$
whenever $(x,y)$ is the smallest solution of \eqref{eqn:xey}.
The equations
\begin{equation}\label{eqn:xtytdsuv}
x(t)=\alpha t e^{-\beta t}{\partial s \over \partial u}\big(\alpha t e^{-\beta t},\beta t e^{-\alpha t}\big)~~\text{ and }~~y(t)=\beta t e^{-\alpha t}{\partial s \over \partial v}\big(\alpha t e^{-\beta t},\beta t e^{-\alpha t}\big) \qquad \forall t \geq 0,
\end{equation}
with $x(t)$ and $y(t)$ defined in \eqref{defxy}, follow from Corollary \ref{prop:xyvsone}.
\end{proof}

\begin{cor}\label{cor:GelationTimeKanbn}
The cross-multiplicative kernel defined in \eqref{biK} is a gelling kernel, and the gelation time corresponding to the Smoluchowski coagulation system of equations 
\eqref{eqn:Knn} with the initial conditions $~\zeta_{i_1,i_2}(0)=\alpha \delta_{1,i_1}\delta_{0,i_2}+\beta \delta_{0,i_1}\delta_{1,i_2}$ equals
$$T_{gel}={1 \over \sqrt{\alpha \beta}}.$$ 
\end{cor}
\begin{proof}
Lemma \ref{lem:novoMcLeod} and Corollary \ref{prop:xyvsone} imply that the mass of the system in \eqref{eqn:Knnt} is conserved until ${1 \over \sqrt{\alpha \beta}}$, after which time it begins to dissipate, i.e.,
$$\begin{cases}
      \sum\limits_{i_1,i_2} (i_1+i_2)\zeta_{i_1,i_2}(t)=\alpha+\beta & \text{ if } t \leq {1 \over \sqrt{\alpha \beta}}; \\
      \sum\limits_{i_1,i_2} (i_1+i_2)\zeta_{i_1,i_2}(t)<\alpha+\beta & \text{ if } t > {1 \over \sqrt{\alpha \beta}}.
\end{cases}$$
\end{proof}

\medskip
\noindent
Recall that we considered two alternative definitions of gelation time in Sect.~\ref{sec:gel}. Definition \eqref{eqn:gelationGeneral01} would often describe the time of the explosion of a higher moment while definition \eqref{eqn:gelationGeneral02} is based on the loss of total mass after gelation.
Comparing Lemma \ref{lem:GelationTimeKanbn} with Corollary \ref{cor:GelationTimeKanbn}, we confirm the equivalence of the two alternative definitions of the gelation time
for the cross-multiplicative kernel, i.e.,
$$\inf\Big\{t>0~:~\sum\limits_{i_1,i_2} (i_1+i_2)^2\zeta_{i_1,i_2}(t)=\infty \Big\}=T_{gel}=\inf\Big\{t>0~:~\sum\limits_{i_1,i_2} (i_1+i_2)\zeta_{i_1,i_2}(t)<\alpha+\beta \Big\}.$$

\section{Applications in minimal spanning trees}\label{sec:applications} 

In this section we demonstrate how the coagulation equations for the multiplicative and cross-multiplicative coalescent processes and 
the weak convergence results of Section \ref{sec:weakconv} can be used for finding the lengths of the minimal spanning trees 
on the complete graph $K_n$ and on the complete bipartite graph $K_{\alpha[n], \beta[n]}$ respectively. 
The main result of this section Theorem \ref{thm:main} is proved using Marcus-Lushnikov processes and coagulation equations in Sect.~\ref{sec:proofKnn}.
The proof of Theorem \ref{thm:main} is preceded by the proof of Theorem \ref{thm:MLmult} in Sect.~\ref{sec:proofKn}, a well-known result of Frieze \cite{Frieze}
that we use to illustrate the approach.

\medskip
\noindent
We recall the following quote from Aldous \cite{Aldous98}:  {\it It turns out that there is a large scientific literature relevant to the Marcus-Lushnikov process, mostly focusing on its deterministic approximation. Curiously, this literature has been largely ignored by random graph theorists}. The broader goal of this section is in bridging the gap between the theory of the Smoluchowski coagulation equations for the Marcus-Lushnikov processes and the random graph theory. 
Here we concentrate on analyzing the length of the minimal spanning tree as the prime example that demonstrates the usefulness of the Marcus-Lushnikov processes and the coalescence theory in general for answering questions about random graphs.  We recall that the asymptotic limit for the mean length of a minimal spanning tree on $K_n$ with independent uniform edge weights over $[0, 1]$ was derived in Frieze \cite{Frieze}. There, it is shown to be $~\lim\limits_{n \rightarrow \infty} E[L_n]=\zeta(3)=\sum\limits_{k=1}^\infty {1 \over k^3}$. 
The mean length of a minimal spanning tree on the complete bipartite graph $K_{n,n}$ with independent uniform edge weights over $[0, 1]$ was shown in \cite{FMcD} to be  $~\lim\limits_{n \rightarrow \infty} E[L_n]=2\zeta(3)$. 
In Beveridge et al \cite{BFMcD}, the minimal spanning tree problem was addressed for $d$-regular graphs. In Sect.~\ref{sec:LnKnn} and \ref{sec:proofKnn}, we will find the mean length of the minimal spanning tree in the case of a complete bipartite graph $K_{\alpha[n], \beta[n]}$ via a connection between the coalescence theory and the random graph theory. Note that $K_{\alpha[n], \beta[n]}$ is an irregular graph when $\alpha \not= \beta$.

\subsection{Relation of Erd\H{o}s-R\'enyi process on $K_n$ to multiplicative coalescent}\label{sec:er}
Recall that Erd\H{o}s-R\'enyi random graph is a model on a complete graph of $n$ vertices, $K_n$, where each edge $e$ of $\binom{n}{2}$ edges there is an associated uniform random variable $U_e$ over $[0,1]$. The random variables $\{U_e\}_e$ are assumed to be independent. For the ``time" parameter $p \in [0,1]$, an edge $e$ is considered ``open" if $U_e \leq p$. Erd\H{o}s-R\'enyi random graph $G(n,p)$ will consist of all $n$ vertices and all open edges at time $p$. The number of open edges is a binomial random variable with parameters $\binom{n}{2}$ and $p$, and mean value $\binom{n}{2}p \sim {pn^2 \over 2}$. As we increase $p$, more and more edges open up, new clusters are created, and cluster merges occur. Thus Erd\H{o}s-R\'enyi random graph model can be viewed as a dynamical model that describes an evolution of a random graph \cite{ER60}. 

\medskip
\noindent
If we condition on the number of edges in $G(n,p)$, the graph structure will no longer depend on $p$. Let $\xi_{n,N}$ be the number of components in an Erd\H{o}s-R\'enyi random graph with $n$ vertices and $N$ edges. For $t>0$, letting $N \sim {tn \over 2}$, Theorem 6 in \cite{ER60} by P. Erd\H{o}s and A. R\'enyi states that
\begin{equation}\label{ER6}
{E[\xi_{n,N}] \over n}={1 \over t}\sum\limits_{k=1}^\infty{k^{k-2} (te^{-t})^k \over k!}+\mathcal{R}_t,
\end{equation}
where the error term is
$$\mathcal{R}_t=\begin{cases}
      O\left({1 \over n}\right) & \text{ if } 0<t<1 \\
      O\left({\log{n} \over n}\right) & \text{ if } t=1 \\
      o(1) & \text{ if } t>1 
\end{cases}.$$
There $\varphi(t)={1 \over t}\sum\limits_{k=1}^\infty{k^{k-2} (te^{-t})^k \over k!}$ reaches its maximum at $t=1$, and
$~\varphi(1)=\sum\limits_{k=1}^\infty{k^{k-2} e^{-k} \over k!}={1 \over 2}$.
Recall the function $x(t)$ defined in \eqref{defx}. 
It was pointed out by P. Erd\H{o}s and A. R\'enyi that $\varphi(t)$ in the equation \eqref{ER6} can be represented via $x(t)$ as follows,
$$\varphi(t)={x(t)-{x^2(t) \over 2} \over t}.$$
Observe that here, since we are letting $N \sim {tn \over 2}$, parameter $t$ is essentially equivalent to $np$. So $t$ is a scaled time parameter.

\bigskip
\noindent
Let  the number of vertices in a connected component of a random graph be referred to as a weight of the cluster (or cluster size). 
In Sect.~\ref{sec:mult} we considered the Marcus-Lushnikov process 
$${\bf ML}_n(t)=\Big(\zeta_1^{[n]}(t),\zeta_2^{[n]}(t),\hdots, \zeta_n^{[n]}(t), 0,0,\hdots \Big)$$
corresponding to the multiplicative coalescent process of $n$ particles that begins with $n$ singletons,  i.e., ${\bf ML}_n(0)=(n,0,0,\hdots)$. As observed in \cite{Aldous}, the process ${\bf ML}_n(t)$ describes cluster size dynamics of the Erd\H{o}s-R\'enyi random graph process $G(n,p)$ with $p=1-e^{-t/n}$. Here the scaled time parameter in the Erd\H{o}s-R\'enyi process is $np=n\big(1-e^{-t/n}\big) \sim t$. Thus the time scale is consistent with the one used in \cite{ER60} by P. Erd\H{o}s and A. R\'enyi.

\bigskip
\noindent
Recall the function $\zeta_k$ in \eqref{eqn:solutionFlory} that solves \eqref{eqn:Flory}. 
As we know, in the Erd\H{o}s-R\'enyi process, the gelation time $T_{gel}=1$ of the Marcus-Lushnikov process with the multiplicative kernel 
corresponds to a time after which a single giant component emerges, and continues to absorb components of smaller size. Indeed, in \cite{ER60}, P. Erd\H{o}s and A. R\'enyi showed that the cycles are rare for a given fixed $t > 0$, and the clusters of size $k$ at time $t$ consist mainly of {\it isolated trees of order} $k$. Specifically, if $\tau_k$ denotes the number of isolated trees of order $k$, Theorem 4b in  \cite{ER60} asserts that
\begin{equation}\label{eqn:er1}
\lim\limits_{n \rightarrow \infty} {kE[\tau_k] \over n}={k^{k-2} t^{k-1} \over k!} e^{-kt}=\zeta_k(t)
\end{equation}
and
\begin{equation}\label{eqn:er2}
\lim\limits_{n \rightarrow \infty} {\sum\limits_{k=1}^\infty kE[\tau_k] \over n}=\lim\limits_{n \rightarrow \infty} {\sum\limits_{k=1}^n kE[\tau_k] \over n}={x(t) \over t},
\end{equation}
where $x(t)$ is defined in \eqref{defx}.
Moreover, Theorem 9b in  \cite{ER60} proves the emergence of one giant component after time $t=1$. There, if we let $\gamma_n(t)$ denote the size of the greatest component at time $t$, then 
$$\lim\limits_{n \rightarrow \infty} {\gamma_n(t) \over n}=1-{x(t) \over t} \qquad \text{ in probability.}$$
So the dynamics of $~g(t):=1-\sum\limits_{k=1}^\infty k\zeta_k(t)=1-{x(t) \over t}~$ represents the asymptotic size of the giant component.


\subsection{The length of the minimal spanning tree in $K_n$}\label{lmstKn}
Recall that in the construction of the Erd\H{o}s-R\'enyi random graph model, each edge $e$ of the complete graph $K_n$ had a random variable $U_e$ associated with it. Here we consider $U_e$ to be uniform over $[0,1]$. However, in general, various types of probability distributions are considered in the extensive literature on the topic.  Now, thinking of $U_e$ as the length of the edge $e$, one can construct a minimal spanning tree on $K_n$. Let random variable $L_n$ denote the length of such minimal spanning tree. The asymptotic limit of the mean value of $L_n$ was considered in Frieze \cite{Frieze}. There, the results \eqref{eqn:er1} and \eqref{eqn:er2} from  P. Erd\H{o}s and A. R\'enyi \cite{ER60} are used in proving the following limit
\begin{equation}\label{eqn:frieze1}
\lim\limits_{n \rightarrow \infty} E[L_n]=\int\limits_0^\infty {x(t) \over t} dt=\sum\limits_{k=1}^\infty \int\limits_0^\infty {k^{k-2} t^{k-1} \over k!} e^{-kt} dt=\zeta(3),
\end{equation}
where $\zeta(3)=\sum\limits_{k=1}^\infty {1 \over k^3}=1.202\hdots$ is the value of the Riemann zeta function at $3$.

Consider a coalescent process with a kernel $K(i,j)$ for which $T_{gel}<\infty$ has been proved. See \cite{Aldous98, Jeon99}.  Then for a corresponding random graph model, we use the following S. Janson's formula \cite{Janson95} 
\begin{equation}\label{janson}
\lim\limits_{n \rightarrow \infty}  E[L_n] = \lim\limits_{n \rightarrow \infty}  \int\limits_0^1 E[\kappa(G(n,p))]dp -1,
\end{equation}
where $\kappa(G(n,p))$ is the number of components in the Erd\H{o}s-R\'enyi  random graph $G(n,p)$,
and prove the following statement.

\begin{thm}\label{thm:MLmult}
Let $L_n$ denote the length of the the minimal spanning tree in $K_n$, where edge weights are independent and uniform random variables on $[0,1]$. Then
\be
\label{eqn:proposedprob} 
\lim\limits_{n \rightarrow \infty} E[L_n]=\sum\limits_{k=1}^\infty \int\limits_0^\infty \zeta_k(t)dt,
\ee
where $\zeta_k(t)$ are the solutions \eqref{eqn:solutionFlory} of the corresponding system of the modified Smoluchowski coagulation equations \eqref{eqn:Flory}. 
\end{thm}

\medskip
\noindent
Observe that the above equation \eqref{eqn:proposedprob} reproduces the result \eqref{eqn:frieze1} of Frieze \cite{Frieze}.
Indeed, plugging \eqref{eqn:solutionFlory} into \eqref{eqn:proposedprob}, we obtain
\begin{align*}
\lim\limits_{n \rightarrow \infty} E[L_n] &=\sum\limits_{k=1}^\infty \int\limits_0^\infty \zeta_k(t)dt=\sum\limits_{k=1}^\infty \int\limits_0^\infty {k^{k-2} t^{k-1} \over k!} e^{-kt}dt\\
&=\sum\limits_{k=1}^\infty {1 \over k^3} \int\limits_0^\infty {k^k t^{k-1} \over \Gamma(k)} e^{-kt}dt=\sum\limits_{k=1}^\infty {1 \over k^3}=\zeta(3).
\end{align*}

\medskip
\noindent
Theorem \ref{thm:MLmult} will be proved in Sect.~\ref{sec:proofKn}. There, we give a novel proof to this well known result \cite{Frieze}.
The proof utilizes only the modified Smoluchowski coagulation equations \eqref{eqn:Flory} and the weak convergence results that appear in Section \ref{sec:weakconv} of this paper..

\medskip
\noindent
Here is the heuristics behind the proof of Theorem \ref{thm:MLmult} presented in Sect.~\ref{sec:proofKn}.
We already observed that the Marcus-Lushnikov process ${\bf ML}_n(t)$ corresponding to the multiplicative coalescent process that begins with $n$ singletons is equivalent to the cluster size dynamics in the process $G(n, 1-e^{-t/n})$.  Here
\begin{eqnarray*} 
\lim\limits_{n \rightarrow \infty}  E[L_n] & = & \lim\limits_{n \rightarrow \infty}  \int\limits_0^1 E[\kappa(G(n,p))] \, dp -1 
= \lim\limits_{n \rightarrow \infty}  \int\limits_0^\infty {1 \over n} E[\kappa(G(n,1-e^{-t/n}))] e^{-t/n} dt -1 \\
& = & \lim\limits_{n \rightarrow \infty}  \int\limits_0^\infty  \sum\limits_{k=1}^\infty {1 \over n} E[\kappa^{er}(k, n, 1-e^{-t/n})] e^{-t/n} dt -1\\
& = & \lim\limits_{n \rightarrow \infty}  \int\limits_0^\infty  \sum\limits_{k=1}^\infty  {E[\zeta_k^{[n]}(t)] \over n} e^{-t/n} dt -1,
\end{eqnarray*}
where $\kappa^{er}(k, n, p)$ is the number of components of size $k$ in $G(n,p)$  and $p=1-e^{-t/n}$. 
Therefore, one could informally calculate the limit as follows:  
\begin{eqnarray}\label{eqn:janson} 
\lim\limits_{n \rightarrow \infty}  E[L_n] & = & \sum\limits_{k=1}^\infty \int\limits_0^\infty  \zeta_k(t)dt+\lim\limits_{n \rightarrow \infty} \int\limits_{T_{gel}}^\infty {1 \over n} e^{-t/n} dt -1 \nonumber \\
& = & \sum\limits_{k=1}^\infty \int\limits_0^\infty  \zeta_k(t)dt+\lim\limits_{n \rightarrow \infty} e^{-T_{gel}/n} -1 ~~=  \sum\limits_{k=1}^\infty \int\limits_0^\infty  \zeta_k(t)dt.
\end{eqnarray}
Here $\int\limits_{T_{gel}}^\infty {1 \over n} e^{-t/n} dt$ represents the emergence of one giant component at time $T_{gel}=1$. 


\subsection{Proof of Theorem \ref{thm:MLmult}}\label{sec:proofKn} 

Here we give a rigorous proof using the idea behind the approach in formula \eqref{eqn:janson}. Note that unlike the original proof in Frieze \cite{Frieze}, our proof will not rely on knowing the distribution of sizes and the geometry of clusters in the Erd\H{o}s-R\'enyi process as provided in \cite{ER60}. Nor will it require knowing anything about large clusters or the emergence of a unique giant component at time $T_{gel}=1$. All that we use is the weak convergence results of Kurtz \cite{EK,Kurtz81} that we applied to the Marcus-Lushnikov processes in Section \ref{sec:weakconv}.

\begin{proof}
Observe that 
\begin{equation}\label{eqn:Tlim}
\lim\limits_{t \rightarrow \infty}  \sum\limits_{k=1}^\infty k\zeta_k(t)=\lim\limits_{t \rightarrow \infty}  \sum\limits_{k=1}^\infty  {k^{k-1} t^{k-1} \over k!} e^{-kt}=\lim\limits_{t \rightarrow \infty }{x(t) \over t}  =0.
\end{equation}
Thus, for any given $\epsilon \in (0,1/4)$, we can fix $T \gg T_{gel}$ so large that
\begin{equation}\label{eqn:Tepsilon}
\sum\limits_{k=1}^\infty k\zeta_k(T)  \leq   {\epsilon \over 2}.
\end{equation}
Notice that the above inequality \eqref{eqn:Tepsilon} ties $T$ to $\epsilon$.

\bigskip
\noindent 
Fix integer $K>0$. By the equation \eqref{multAS2} proved in Sect.~\ref{sec:multWL} we know that 
$$\lim\limits_{n \to \infty} \sup\limits_{s \in [0,T]} \left| \sum\limits_{k=1}^K n^{-1}\zeta_k^{[n]}(s)-\sum\limits_{k=1}^K \zeta_k(s)\right|=0 \qquad \text{ a.s.}$$ 
Thus, the probability of the complement of the event
\begin{equation}\label{eqn:eventQ}
Q^\epsilon_{K,T,n}:=\left\{ \sum\limits_{k=1}^K  {k \zeta_k^{[n]}(T) \over n}  \leq \epsilon \right\}
\end{equation}
is decreasing to zero as $n \rightarrow \infty$. Moreover, 
$$q^\epsilon_{K,T}(n):=P(\overline{Q^\epsilon_{K,T,n}})=O(n^{-2})$$
by Proposition \ref{prop:on} in Sect.~\ref{sec:CLT} as $\sum\limits_{k=1}^K  {k \zeta_k^{[n]}(0) \over n}-\sum\limits_{k=1}^K k\zeta_k(0)=0$.

\medskip
\noindent 
We will split $\int\limits_0^\infty  \sum\limits_{k=1}^\infty  {E[\zeta_k^{[n]}(t)] \over n} e^{-t/n} dt$ as follows.
\begin{align}\label{4int}
\int\limits_0^\infty  \sum\limits_{k=1}^\infty  {E[\zeta_k^{[n]}(t)] \over n} e^{-t/n} dt &=\int\limits_0^T  \sum\limits_{k=1}^K  {E[\zeta_k^{[n]}(t)] \over n} e^{-t/n} dt 
& \text{\bf (Term I)}\nonumber \\
&+\int\limits_0^T  \sum\limits_{k=K+1}^\infty  {E[\zeta_k^{[n]}(t)] \over n} e^{-t/n} dt & \text{\bf (Term II)}\nonumber \\
&+\big(1-q^\epsilon_{K,T}(n)\big)\int\limits_T^\infty  \sum\limits_{k=1}^K  {E[\zeta_k^{[n]}(t)~|~Q^\epsilon_{K,T,n}] \over n} e^{-t/n} dt & \text{\bf (Term III)}\nonumber \\
&+\big(1-q^\epsilon_{K,T}(n)\big)\int\limits_T^\infty  \sum\limits_{k=K+1}^\infty  {E[\zeta_k^{[n]}(t)~|~Q^\epsilon_{K,T,n}] \over n} e^{-t/n} dt & \text{\bf (Term IV)}\nonumber \\
&+q^\epsilon_{K,T}(n)\int\limits_T^\infty  \sum\limits_{k=1}^\infty  {E[\zeta_k^{[n]}(t)~|~\overline{Q^\epsilon_{K,T,n}}] \over n} e^{-t/n} dt & \text{\bf (Term V)}
\end{align}
Next, we estimate the terms {\bf I-V} in \eqref{4int}.

\medskip
\noindent 
{\bf Term I.} As it is proven in \eqref{multAS} of Section \ref{sec:weakconv}, 
$~\lim\limits_{n \to \infty} \sup\limits_{s \in [0,T]} \left|n^{-1}\zeta_k^{[n]}(s)-\zeta_k(s)\right|=0~~a.s.~$ on $[0,T]$ for all $k=1,2,\hdots,K$. Therefore,
$$\lim\limits_{n \rightarrow \infty} \int\limits_0^T  \sum\limits_{k=1}^K  {E[\zeta_k^{[n]}(t)] \over n} e^{-t/n} dt =\sum\limits_{k=1}^K \int\limits_0^T  \zeta_k(t)dt.$$

\medskip
\noindent 
{\bf Term II.} Observe that,
$$ \sum\limits_{k=K+1}^\infty  {\zeta_k^{[n]}(t) \over n} \leq {1 \over Kn}\sum\limits_{k=K+1}^\infty k\zeta_k^{[n]}(t) ={1 \over K}\left(1-\sum\limits_{k=1}^K {k\zeta_k^{[n]}(t) \over n}\right)
\leq {1 \over K}.$$
Thus,
$$\int\limits_0^T  \sum\limits_{k=K+1}^\infty  {E[\zeta_k^{[n]}(t)] \over n} e^{-t/n} dt=O\left({T \over K}\right)$$
regardless of the value of $n>0$.

\medskip
\noindent 
{\bf Term III.} 
 Recall that in the theory of Marcus-Lushnikov processes the {\it gel} is the set of all ``large" clusters. By analogy, we define the {\it K-gel} to be the collection of all clusters of mass bigger than $K$. Let $M_{K\,gel}(t)$ denote the total mass of all clusters in the {\it K-gel} at time $t \geq 0$.
 

\medskip
\noindent
Now, conditioning on the event $Q^\epsilon_{K,T,n}$, the mass of the {\it K-gel} is $M_{K\, gel}(t) \geq (1-\epsilon)n$ for all $t \geq T$. Thus each cluster not in {\it K-gel} will be gravitating toward the {\it K-gel} with the rate of at least ${M_{K\,gel}(t) \over n} \geq 1-\epsilon$. 
Consider a cluster that was not in  {\it K-gel} at time $T$. Let $T+L$ be the time it becomes a part of the {\it K-gel}. Then, its contribution to the integral 
$~\int\limits_T^\infty  \sum\limits_{k=1}^K  {E[\zeta_k^{[n]}(t)~|~Q^\epsilon_{K,T,n}] \over n} e^{-t/n} dt~$ is at most
\begin{align*}
\int\limits_T^\infty  {E[{\bf 1}_{[T,T+L]}(t)~|~Q^\epsilon_{K,T,n}] \over n} e^{-t/n} dt \leq & {\int\limits_T^\infty  E[{\bf 1}_{[T,T+L]}(t)~|~Q^\epsilon_{K,T,n}]dt \over n}e^{-T/n}\\
&={E[L~|~Q^\epsilon_{K,T,n}] \over n}e^{-T/n} \leq {1 \over (1-\epsilon)n},
\end{align*}
where
$${\bf 1}_A=\begin{cases}
      1 & \text{ if } t \in A \\
      0 & \text{ if } t \not\in A \\
\end{cases}.$$

\medskip
\noindent
The number of clusters not in {\it K-gel} at time $t \geq T$ is
$$\sum\limits_{k=1}^K  \zeta_k^{[n]}(t) \leq \sum\limits_{k=1}^K  k \zeta_k^{[n]}(t)  \leq \epsilon n.$$
Therefore,
$$\int\limits_T^\infty  \sum\limits_{k=1}^K  {E[\zeta_k^{[n]}(t)~|~Q^\epsilon_{K,T,n}] \over n} e^{-t/n} dt\leq {\epsilon n \over (1-\epsilon)n}={\epsilon \over 1-\epsilon } < 2\epsilon. $$

\medskip
\noindent 
{\bf Term IV.} We let $\mathcal{C}=\{C_1,C_2,C_3,\hdots, C_M \}$ denote the set of all clusters that ever exceeded mass $K$ in the whole history of the process $\big\{{\bf ML}_n(t)\big\}_{t \in [0,\infty)}$. There are less than $n/K$ such clusters, i.e., $M <n/K$. For each $C_i$, the emergence time $a_i$ is the time when a pair of clusters of mass not exceeding $K$ mergers into a new cluster $C_i$ of mass greater than $K$. We enumerate these clusters in the order they emerge. 

\medskip
\noindent
Let $M_i(t)$ denote the mass of cluster $C_i$ at time $t$. Consider a pair of  clusters, $C_i$ and $C_j$, coexisting in the {\it K-gel} at time $t$, each of mass smaller than $n/2$. We split their merger rate into two by saying that $C_i$ absorbs $C_j$ with rate ${1 \over 2n} M_i(t)M_j(t)$, and $C_j$ absorbs $C_i$ with rate ${1 \over 2n} M_i(t)M_j(t)$.  In other words, $C_i$ and $C_j$ merge with rate ${1 \over n} M_i(t)M_j(t)$, and which one of the two clusters absorbs the other is decided with a toss of an independent fair coin.

\medskip
\noindent
There is a finite stopping time 
$$t^*=\min\{t \geq 0 ~:~\exists C_i \in \mathcal{C} ~\text{ with } M_i(t)\geq n/2\}$$ 
when a cluster $C_{i^*}$ has its mass $M_{i^*}(t^*) \geq n/2$. After $t^*$, the rules of interactions of cluster $C_{i^*}$ with the other clusters in $\mathcal{C}$ change as follows. 
For $t > t^*$, $C_{i^*}$ absorbs $C_j$ with rate ${1 \over n} M_{i^*}(t)M_j(t)$, while $C_{i^*}$ itself  cannot be absorbed by any other cluster in $\mathcal{C}$.

\medskip
\noindent
Let $b_i$ denote the time when cluster $C_i$ is absorbed by another cluster in collection $\mathcal{C}$. Naturally, there will be only one survivor $C_{i^*}$ with $b_{i^*}=\infty$. Let $J_i=[a_i,b_i) \cap [T,\infty)$ denote the lifespan of cluster $C_i$. Note that a cluster $C_i$ from the set $\mathcal{C}$ existing at time $t \in [a_i,b_i)$ is absorbed into one of the clusters in the {\it K-gel} with the total instantaneous rate of
$$\lambda_i(t) \geq {1 \over 2n} M_i(t)\big(M_{K\,gel}(t)-M_i(t)\big).$$
Conditioning on the event $Q^\epsilon_{K,T,n}$ defined in \eqref{eqn:eventQ}, we have that if $M_i(t) <n/2$ for $t \in J_i$, then the rate of absorption of $C_i$ into the {\it K-gel} is  
$$\lambda_i(t) \geq {1 \over 2n} M_i(t)\left((1-\epsilon)n -{1 \over 2}n\right) \geq {1 \over 2n} M_i(t)\left({3 \over 4}n -{1 \over 2}n\right)\geq {1 \over 8} M_i(t) > {K \over 8}.$$


\medskip
\noindent
Next, 
\begin{equation}\label{eqn:giant}
\int\limits_T^\infty  \sum\limits_{k=K+1}^\infty  {E[\zeta_k^{[n]}(t)~|~Q^\epsilon_{K,T,n}] \over n} e^{-t/n} dt = \int\limits_T^\infty  {1 \over n} e^{-t/n} dt +\mathcal{E}
\end{equation}
where $\int\limits_T^\infty  {1 \over n} e^{-t/n} dt$ is due to the event $Q^\epsilon_{K,T,n}$ which guarantees the existence of at least one component from $\mathcal{C}$ in the {\it K-gel} for all $t \in [T,\infty)$ and the second term $\mathcal{E}$ is responsible for all the times $t \geq T$ when the number of clusters in the {\it K-gel}  is greater than one. The term $\mathcal{E}$ is bounded as follows 
$$\mathcal{E} \leq \int\limits_T^\infty  {E\left[\sum\limits_{i:~i\not= i^*} {\bf 1}_{J_i}(t)~\big|~Q^\epsilon_{K,T,n} \right] \over n} e^{-t/n} dt.$$
Now, each cluster $C_i$ is gravitating towards the rest of the {\it K-gel} with the rate of at least $K/8$. 
Thus, for each $i\not=i^*$,
$$\int\limits_T^\infty  {E\left[{\bf 1}_{J_i}(t)~|~Q^\epsilon_{K,T,n} \right] \over n} e^{-t/n} dt \leq {E[|J_i| ~|~Q^\epsilon_{K,T,n}]\over n} e^{-{T \over n}} \leq {8 \over nK}.$$
Hence, since the cardinality of set $\mathcal{C}$ is $M < n/K$,
$$\mathcal{E} < {n \over K} \cdot {8 \over nK}={8 \over K^2},$$
and from \eqref{eqn:giant}, we obtain
$$\int\limits_T^\infty  \sum\limits_{k=K+1}^\infty  {E[\zeta_k^{[n]}(t)~|~Q^\epsilon_{K,T,n}] \over n} e^{-t/n} dt=1+O(K^{-2})+O\left({T \over n}\right) ~\text{ as }n \rightarrow \infty,$$
where the term $O(K^{-2})$ does not depend on the value of $n>0$.

\medskip
\noindent 
{\bf Term V.} Here
$$q^\epsilon_{K,T}(n)\int\limits_T^\infty  \sum\limits_{k=1}^\infty  {E[\zeta_k^{[n]}(t)~|~\overline{Q^\epsilon_{K,T,n}}] \over n} e^{-t/n} dt ~\leq nq^\epsilon_{K,T}(n) \int\limits_T^\infty   {1 \over n} e^{-t/n} dt ~\leq nq^\epsilon_{K,T}(n)=O(n^{-1}).$$

\bigskip
\noindent
Finally, by putting together the analysis in {\bf Terms I-V} in the equation \eqref{4int}, we obtain for a given fixed $\epsilon \in (0,1/4)$, sufficiently large fixed $T \gg T_{gel}$ satisfying \eqref{eqn:Tepsilon}, and arbitrarily large $K$,
\begin{equation}\label{eqn:rigorKn}
\int\limits_0^\infty  \sum\limits_{k=1}^\infty  {E[\zeta_k^{[n]}(t)] \over n} e^{-t/n} dt=\sum\limits_{k=1}^K \int\limits_0^T  \zeta_k(t)dt+1+O\left({T \over K}\right)+O(K^{-2})+O(\epsilon)+O\left({T \over n}\right)+O(n^{-1}),
\end{equation}
which, when we increase $n$  to infinity will yield
\begin{align*}
\limsup\limits_{n \rightarrow \infty}\left|\int\limits_0^\infty  \sum\limits_{k=1}^\infty  {E[\zeta_k^{[n]}(t)] \over n} e^{-t/n} dt-\sum\limits_{k=1}^\infty \int\limits_0^\infty  \zeta_k(t)dt-1\right|
=&\sum\limits_{k=K+1}^\infty \int\limits_0^T  \zeta_k(t)dt+\sum\limits_{k=1}^\infty \int\limits_T^\infty  \zeta_k(t)dt\\
&+O\left({T \over K}\right)+O(K^{-2})+O(\epsilon).
\end{align*}
Consequently, taking $\limsup\limits_{K \rightarrow \infty}$, we obtain
$$\limsup\limits_{n \rightarrow \infty}\left|\int\limits_0^\infty  \sum\limits_{k=1}^\infty  {E[\zeta_k^{[n]}(t)] \over n} e^{-t/n} dt-\sum\limits_{k=1}^\infty \int\limits_0^\infty  \zeta_k(t)dt-1\right|
=\sum\limits_{k=1}^\infty \int\limits_T^\infty  \zeta_k(t)dt+O(\epsilon).$$
Finally, formula \eqref{eqn:Tepsilon} guarantees that decreasing $\epsilon$ down to zero will propel $T$ to $+\infty$, and
$$\lim\limits_{n \rightarrow \infty}   \int\limits_0^\infty  \sum\limits_{k=1}^\infty  {E[\zeta_k^{[n]}(t)] \over n} e^{-t/n} dt=\sum\limits_{k=1}^\infty \int\limits_0^\infty  \zeta_k(t)dt+1.$$
Thus we confirmed formula \eqref{eqn:proposedprob} for the case of the multiplicative coalescent process.
\end{proof}

\subsection{Relation of Erd\H{o}s-R\'enyi process on $K_{\alpha[n], \beta[n]}$ to cross-multiplicative coalescent}\label{sec:ERonKnn}
Let $\alpha, \beta >0$ be given, and consider two integer valued functions, $\alpha[n]=\alpha n +o(\sqrt{n})$ and $\beta[n]=\beta n +o(\sqrt{n})$.
Next, we introduce the Erd\H{o}s-R\'enyi random graph process on the bipartite graph $K_{\alpha[n],\beta[n]}$ with $\alpha[n]$ vertices on the left side and $\beta[n]$ vertices on the right side. In this random graph process on $K_{\alpha[n],\beta[n]}$, for each edge $e$ of $\alpha[n]\beta[n]=\alpha\beta n^2+o(n\sqrt{n})$ edges we have an associated random variable $U_e$, distributed uniformly on $[0,1]$. The random variables $\{U_e\}_e$ are assumed to be independent. For the ``time" parameter $p \in [0,1]$, an edge $e$ is considered ``open" if $U_e \leq p$. Erd\H{o}s-R\'enyi random graph $G(n,p)$ will consist of all $n$ vertices and all open edges at time $p$.

\medskip
\noindent
In this Erd\H{o}s-R\'enyi random graph process, the probability of two components merging at a given time depends only on the number of edges that connect those two components. If connected component $C_i$ and $C_j$ have partition sizes $(i_1,i_2)$ and $(j_1,j_2)$ respectively, then there are $i_1j_2+i_2j_1$ edges which, when opened, would connect $C_i$ and $C_j$. 
Therefore, the cross-multiplicative coalescent process represents the cluster dynamics of the above Erd\H{o}s-R\'enyi random graph process on the bipartite graph $K_{\alpha[n],\beta[n]}$ under the time change $p=1-e^{-t/n}$. This coalescent process representation is obtained by letting each cluster connecting $i_1$ vertices on the left side of the bipartite graph with $i_2$ vertices on the right side of the bipartite graph be assigned a two-dimensional weight vector $\left[\!\!\begin{array}{c}i_1 \\i_2\end{array}\!\!\right]$. 
Then, the Marcus-Lushnikov process $\zeta^{[n]}_{i_1,i_2}(t)$ corresponding to the cross-multiplicative coalescent process will count the number of clusters with the weight vector $\left[\!\!\begin{array}{c}i_1 \\i_2\end{array}\!\!\right]$ at time $t$.

\subsection{The length of the minimal spanning tree on $K_{\alpha[n], \beta[n]}$ via $\zeta_{i_1,i_2}(t)$.}\label{sec:LnKnn}
Consider the Erd\H{o}s-R\'enyi random graph  model on a complete bipartite graph $K_{\alpha[n], \beta[n]}$.
Let us interpret $U_e$ as the length of edge $e$. Then one can construct a minimal spanning tree on $K_{\alpha[n], \beta[n]}$. Let random variable $L_n$ denote the length of such minimal spanning tree. We want to represent the asymptotic limit of the mean value of $L_n$ via $\zeta_{i_1,i_2}(t)$.

For a random graph process $G(n,p)$ over $K_{\alpha[n], \beta[n]}$, Lemma 1 in Beveridge et al \cite{BFMcD} implies
\begin{equation}\label{jansonKnn}
E[L_n] =  \int\limits_0^1 E[\kappa(G(n,p))] dp -1,
\end{equation}
where $\kappa(G(n,p))$ is the number of components in the random graph process $G(n,p)$ at time $p$.
This will be used in Sect.~\ref{sec:proofKnn} for proving the following theorem.

\begin{thm}\label{thm:mainKnn}
Let $\alpha, \beta >0$ and $L_n=L_n(\alpha,\beta)$ be the length of a minimal spanning tree on a complete bipartite graph $K_{\alpha[n], \beta[n]}$ with partitions of sizes $$\alpha[n]=\alpha n +o(\sqrt{n}) ~\text{ and }~\beta[n]=\beta n +o(\sqrt{n})$$ 
and independent uniform edge weights over $[0, 1]$. Then
\begin{equation}\label{eqn:mainKnn}
\lim\limits_{n \rightarrow \infty} E[L_n]=\sum\limits_{i_1,i_2}^\infty \int\limits_0^\infty \zeta_{i_1,i_2}(t)dt.
\end{equation}
where $\zeta_{i_1,i_2}(t)$ indexed by $\mathbb{Z}_+^2 \setminus \{(0,0)\}$ is the solution of the modified Smoluchowski coagulation system \eqref{eqn:Knnt} with the initial conditions $\zeta_{i_1,i_2}(0)=\alpha \delta_{1,i_1}\delta_{0,i_2}+\beta \delta_{0,i_1}\delta_{1,i_2}$.
\end{thm}

Observe that if we plug-in the solutions \eqref{eqn:sol} of the reduced system of Smoluchowski coagulation equations \eqref{eqn:Knnt} into the right hand side of \eqref{eqn:mainKnn}, we get
\begin{align}\label{eqn:LnKnn}
\sum\limits_{i_1,i_2}^\infty \int\limits_0^\infty \zeta_{i_1,i_2}(t)dt &={\alpha \over \beta}+{\beta \over \alpha}+\sum\limits_{i_1 \geq 1:~i_2 \geq 1}\alpha^{i_1}\beta^{i_2}S_{i_1,i_2}\int\limits_0^\infty t^{i_1+i_2-1} e^{-(\beta i_1+\alpha i_2)t} dt \nonumber \\
& ={\alpha \over \beta}+{\beta \over \alpha}+\sum\limits_{i_1 \geq 1:~i_2 \geq 1}{\alpha^{i_1}\beta^{i_2}S_{i_1,i_2} \over (\beta i_1+\alpha i_2)^{i_1+i_2}}(i_1+i_2-1)! \nonumber \\
& =\gamma+{1 \over \gamma}+\sum\limits_{i_1 \geq 1:~i_2 \geq 1}{\gamma^{i_1}S_{i_1,i_2} \over (i_1+\gamma i_2)^{i_1+i_2}}(i_1+i_2-1)!
\end{align}
with $\gamma={\alpha \over \beta}$.

\medskip
\noindent
Next, by combining Lemma \ref{lem:S} with \eqref{eqn:LnKnn} we obtained the following important theorem.

\begin{thm}\label{thm:main}
Let $\alpha, \beta >0$, $\gamma=\alpha/\beta$, and $L_n=L_n(\alpha,\beta)$ be the length of a minimal spanning tree on a complete bipartite graph $K_{\alpha[n], \beta[n]}$ with partitions of sizes $$\alpha[n]=\alpha n +o(\sqrt{n}) ~\text{ and }~\beta[n]=\beta n +o(\sqrt{n})$$
and independent uniform edge weights over $[0, 1]$. Then the limiting mean length of the minimal spanning tree is
$$\lim\limits_{n \rightarrow \infty} E[L_n] =\gamma+{1 \over \gamma}+\sum\limits_{i_1 \geq 1;~i_2 \geq 1}{(i_1+i_2-1)!\over i_1!i_2!}{\gamma^{i_1}i_1^{i_2-1}i_2^{i_1-1}\over (i_1+\gamma i_2)^{i_1+i_2}}.$$
\end{thm}

\medskip
\noindent
Theorem \ref{thm:main} is consistent with \cite{FMcD}, where it was shown that for $\alpha=\beta$, $~\lim\limits_{n \rightarrow \infty} E[L_n]=2\zeta(3)$.
Indeed, we have the following Corollary reproducing the results in \cite{FMcD}. Observe however that for $\alpha \not= \beta$ the bipartite graph is irregular and the results in Frieze and McDiarmid \cite{FMcD} no longer apply.

\begin{cor}\label{cor:FMcD}
If $\gamma=1$, then
$$\lim\limits_{n \rightarrow \infty} E[L_n]=2\zeta(3).$$
\end{cor}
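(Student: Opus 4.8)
The plan is to specialize the closed form of Theorem \ref{main} to $\gamma=1$ and then collapse the resulting double series to a single sum indexed by the total cluster size. Setting $\gamma=1$, the boundary contribution $\gamma+1/\gamma$ becomes $2$, and substituting $S_{i_1,i_2}=i_1^{i_2-1}i_2^{i_1-1}/(i_1!i_2!)$ from Proposition \ref{prop:S} the remaining double sum is
$$\Sigma:=\sum_{i_1\ge 1;~i_2\ge 1}\frac{(i_1+i_2-1)!}{i_1!i_2!}\frac{i_1^{i_2-1}i_2^{i_1-1}}{(i_1+i_2)^{i_1+i_2}}.$$
Since every term is positive, I would first regroup by the total size $k=i_1+i_2$; writing $i_1=j$ and $i_2=k-j$ gives
$$\Sigma=\sum_{k=2}^\infty\frac{(k-1)!}{k^k}\sum_{j=1}^{k-1}\frac{j^{k-j-1}(k-j)^{j-1}}{j!(k-j)!}=\sum_{k=2}^\infty\frac{(k-1)!}{k!\,k^k}\,A_k,\qquad A_k:=\sum_{j=1}^{k-1}\binom{k}{j}j^{k-j-1}(k-j)^{j-1}.$$
Thus the entire statement reduces to the combinatorial identity $A_k=2\,k^{k-2}$.

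The crux is therefore to establish $A_k=2\,k^{k-2}$, and I would do so by a double-counting argument. For a fixed $j$-subset $A\subseteq[k]$ with complement $B=[k]\setminus A$, the quantity $j^{k-j-1}(k-j)^{j-1}$ is precisely the number of spanning trees of the complete bipartite graph $K_{A,B}$ (Cayley's count $|A|^{|B|-1}|B|^{|A|-1}$), so $A_k$ counts pairs $(A,T)$ in which $T$ is a tree on $[k]$ all of whose edges cross the partition $\{A,B\}$. Now every labeled tree $T$ on $[k]$ is connected and bipartite, hence admits a unique proper $2$-coloring, and its two (necessarily nonempty) color classes are the only sets $A$ for which the pair $(A,T)$ is admissible. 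Consequently each of the $k^{k-2}$ labeled trees on $[k]$ is counted exactly twice, once for each color class in the role of $A$, which gives $A_k=2\,k^{k-2}$. Alternatively, $A_k=2\,k^{k-2}$ can be extracted by specializing the generalized Abel identity (\ref{eq:HL}) of Huang and Liu already used in the proof of Proposition \ref{prop:S}.

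Granting the identity, the inner sum equals $2\,k^{k-2}/k!$, so the $k$-th term of $\Sigma$ is $\frac{(k-1)!}{k!}\cdot\frac{2\,k^{k-2}}{k^k}=\frac{2}{k^3}$, whence $\Sigma=2\sum_{k\ge 2}k^{-3}=2\bigl(\zeta(3)-1\bigr)$. Adding back the boundary term yields $\lim_{n\to\infty}E[L_n]=2+2\bigl(\zeta(3)-1\bigr)=2\zeta(3)$, as claimed. The only genuinely nontrivial point is the identity $A_k=2\,k^{k-2}$; the interchange of summation order is harmless because $\Sigma$ is a series of positive terms whose value (as the final evaluation confirms) is finite, so a Tonelli-type rearrangement is unproblematic.
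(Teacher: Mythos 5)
Your proof is correct, and it follows the same overall skeleton as the paper's: both reduce the double series at $\gamma=1$ to the single identity $\sum_{j=1}^{k-1}\binom{k}{j}j^{k-j-1}(k-j)^{j-1}=2k^{k-2}$ (equivalently $\sum_{i_1+i_2=k}S_{i_1,i_2}=2k^{k-2}/k!$) and then observe that the $k$-th term collapses to $2/k^3$. Where you genuinely diverge is in how this identity is established. The paper proves it algebraically: it specializes Abel's binomial theorem to get $\sum_{i=0}^{n}\binom{n}{i}i^{n-i-1}(n-i)^{i}=n^{n-1}$, reads this as $\sum_{i_1+i_2=n}i_1S_{i_1,i_2}=n^{n-1}/n!$, and then uses the symmetry $S_{i_1,i_2}=S_{i_2,i_1}$ to convert the weighted sum into the unweighted one. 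You instead give a bijective double count: $j^{k-j-1}(k-j)^{j-1}$ is the bipartite Cayley count of spanning trees of $K_{A,B}$, so the sum counts pairs $(A,T)$ with $T$ a labeled tree on $[k]$ all of whose edges cross the partition, and the unique proper $2$-coloring of a tree shows each of the $k^{k-2}$ trees is counted exactly twice. Your argument is self-contained and explains \emph{why} the factor $2$ and the Cayley number appear (each connected component of the bipartite random graph at criticality is a tree with a canonical bipartition), which is conceptually satisfying and consistent with the combinatorial meaning of $S_{i_1,i_2}$; the paper's route is shorter given that the Abel-type machinery (\ref{eq:HL}) is already in place from Proposition \ref{prop:S}, and avoids having to invoke the bipartite spanning-tree count as an external fact. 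Your remarks on positivity justifying the rearrangement and on the alternative derivation via (\ref{eq:HL}) are both apt.
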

\begin{proof}
Abel's binomial theorem \cite{Comtet,Riordan} states that
$$\sum\limits_{k=0}^n \binom{n}{k} (x-kz)^{k-1}(y+kz)^{n-k}=x^{-1}(x+y)^n.$$
Plugging-in $x=nz \not=0$, $y=0$, and $i=n-k$, we obtain
$$\sum\limits_{i=0}^n \binom{n}{i} i^{n-i-1}(n-i)^i=n^{n-1}$$
and therefore,
$$\sum\limits_{i_1,i_2:~i_1+i_2=n}i_1 S_{i_1,i_2}=\sum\limits_{i_1,i_2:~i_1+i_2=n}i_1 {i_1^{i_2-1}i_2^{i_1-1} \over i_1!i_2!}={n^{n-1} \over n!}.$$
Hence, 
$$n \cdot \!\!\!\!\! \sum\limits_{i_1,i_2:~i_1+i_2=n}S_{i_1,i_2}=\sum\limits_{i_1,i_2:~i_1+i_2=n}(i_1+i_2) S_{i_1,i_2}=2\sum\limits_{i_1,i_2:~i_1+i_2=n}i_1 S_{i_1,i_2}=2{n^{n-1} \over n!}$$
and
$$\sum\limits_{i_1,i_2:~i_1+i_2=n}S_{i_1,i_2}=2{n^{n-2} \over n!}.$$
Plugging the above into \eqref{eqn:LnKnn} with $\gamma=1$, we obtain
\begin{align}\label{eqn:LnKnnS}
\lim\limits_{n \rightarrow \infty} E[L_n] &=2+\sum\limits_{i_1 \geq 1:~i_2 \geq 1}{S_{i_1,i_2} \over (i_1+i_2)^{i_1+i_2}}(i_1+i_2-1)! \nonumber \\
& =2+\sum\limits_{n=2}^\infty \left( \sum\limits_{i_1, i_2: ~i_1+i_2=n}{S_{i_1,i_2} \over n^n}\right)(n-1)!\nonumber \\
& =2+\sum\limits_{n=2}^\infty 2{n^{n-2} \over n!}\cdot{1 \over n^n}(n-1)!  \nonumber \\
& =2+\sum\limits_{n=2}^\infty {2 \over n^3}=2\zeta(3).
\end{align}
Thus confirming the results in \cite{FMcD}.
\end{proof}

\subsection{Proof of Theorem \ref{thm:mainKnn}}\label{sec:proofKnn}

Let us give a rigorous proof of Theorem \ref{thm:mainKnn}. Here, we will follow the strategy used for proving Theorem \ref{thm:MLmult} in Sect.~\ref{sec:proofKn}.

\begin{proof}
Observe that 
\begin{equation}\label{eqn:TlimCross}
\lim\limits_{t \rightarrow \infty}  \sum\limits_{i_1,i_2} i_1\zeta_{i_1,i_2}(t)=0 \quad \text { and } \quad \lim\limits_{t \rightarrow \infty}\sum\limits_{i_1,i_2} i_2\zeta_{i_1,i_2}(t)=0.
\end{equation}
Indeed, by plugging in $\zeta_{i_1,i_2}(t)$ as in \eqref{eqn:zetasol}, we obtain
\begin{align*}
{d \over dt}\sum\limits_{i_1,i_2} i_1\zeta_{i_1,i_2}(t) &=  \sum\limits_{i_1,i_2} i_1\zeta_{i_1,i_2}(t)\left({i_1+i_2-1 \over t}-(\beta i_1+\alpha i_2)\right)
\leq - {\alpha \wedge \beta \over 2} \sum\limits_{i_1,i_2} i_1\zeta_{i_1,i_2}(t)
\end{align*}
for $t>{1 \over \alpha \wedge \beta}$. Thus, $\sum\limits_{i_1,i_2} i_1\zeta_{i_1,i_2}(t)$, and similarly $\sum\limits_{i_1,i_2} i_2\zeta_{i_1,i_2}(t)$, would decrease to zero
exponentially fast when  $t>{1 \over \alpha \wedge \beta}$.

\bigskip
\noindent
Now, having established \eqref{eqn:TlimCross}, for any given $\epsilon \in (0,1/4)$, we can fix $T \gg T_{gel}$ so large that
\begin{equation}\label{eqn:TepsilonCross}
\sum\limits_{i_1,i_2} i_1\zeta_{i_1,i_2}(t)\leq{\alpha\epsilon \over 2} \quad \text { and } \quad \sum\limits_{i_1,i_2} i_2\zeta_{i_1,i_2}(t)  \leq   {\beta\epsilon \over 2}.
\end{equation}
Notice that the above inequalities \eqref{eqn:TepsilonCross} ties $T$ to $\epsilon$.

\bigskip
\noindent
Fix integers $K_1>0$ and $K_2>0$, and let $R:=R(K_1,K_2)=\{1,2,\hdots,K_1\} \times \{1,2,\hdots,K_2\}$. 
By the equation \eqref{crossAS2} in Sect.~\ref{sec:hydcross} we have
$$\lim\limits_{n \to \infty} \sup\limits_{s \in [0,T]} \left| n^{-1}\sum\limits_R\zeta_{i_1,i_2}^{[n]}(s)-\sum\limits_R \zeta_{i_1,i_2}(s)\right|=0 \qquad \text{ a.s.}$$
Thus, the probability of the complement of the event
\begin{equation}\label{eqn:eventQcross}
Q^\epsilon_{R,T,n}:=\left\{ \sum\limits_{{\bf i} \in R}i_1{\zeta^{[n]}_{i_1,i_2}(T) \over n}\leq {3 \over 4}\alpha\epsilon  \quad \text { and } \quad \sum\limits_{{\bf i} \in R}i_2{\zeta^{[n]}_{i_1,i_2}(T) \over n}\leq {3 \over 4}\beta\epsilon \right\}
\end{equation}
is decreasing to zero as $n \rightarrow \infty$. Moreover, 
$$q^\epsilon_{R,T}(n):=P(\overline{Q^\epsilon_{R,T,n}})=O(n^{-2})$$
by Proposition \ref{prop:on} in Sect.~\ref{sec:CLT} since
$$\lim\limits_{n \rightarrow \infty}\sqrt{n}\left(\sum\limits_{{\bf i} \in R}i_1{\zeta^{[n]}_{i_1,i_2}(0) \over n}-\sum\limits_{{\bf i} \in R}i_1\zeta_{i_1,i_2}(0) \right)=\lim\limits_{n \rightarrow \infty}\sqrt{n}\big(\alpha[n]/n -\alpha\big)=0$$
and
$$\lim\limits_{n \rightarrow \infty}\sqrt{n}\left(\sum\limits_{{\bf i} \in R}i_2{\zeta^{[n]}_{i_1,i_2}(0) \over n}-\sum\limits_{{\bf i} \in R}i_2\zeta_{i_1,i_2}(0) \right)=\lim\limits_{n \rightarrow \infty}\sqrt{n}\big(\beta[n]/n -\beta\big)=0.$$

\medskip
\noindent
We know from \eqref{jansonKnn} that
$$\lim\limits_{n \rightarrow \infty}  E[L_n] = \lim\limits_{n \rightarrow \infty}  \int\limits_0^1 E[\kappa(G(n,p))] dp -1=\lim\limits_{n \rightarrow \infty} \int\limits_0^\infty  \sum\limits_{i_1,i_2}  {E[\zeta^{[n]}_{i_1,i_2}(t)] \over n} e^{-t/n} dt-1$$
provided the latter limit exists.

\medskip
\noindent 
We will split $\int\limits_0^\infty  \sum\limits_{i_1,i_2}  {E[\zeta^{[n]}_{i_1,i_2}(t)] \over n} e^{-t/n} dt$ as follows.
\begin{align}\label{4intCross}
\int\limits_0^\infty  \sum\limits_{i_1,i_2}  {E[\zeta^{[n]}_{i_1,i_2}(t)] \over n} e^{-t/n} dt &=\int\limits_0^T  \sum\limits_{{\bf i} \in R}  {E[\zeta^{[n]}_{i_1,i_2}(t)] \over n} e^{-t/n} dt 
& \text{\bf (Term I)}\nonumber \\
&+\int\limits_0^T  \sum\limits_{{\bf i} \not\in R} {E[\zeta^{[n]}_{i_1,i_2}(t)] \over n} e^{-t/n} dt & \text{\bf (Term II)}\nonumber \\
&+\big(1-q^\epsilon_{R,T}(n)\big)\int\limits_T^\infty  \sum\limits_{{\bf i} \in R}  {E[\zeta^{[n]}_{i_1,i_2}(t)~|~Q^\epsilon_{R,T,n}] \over n} e^{-t/n} dt & \text{\bf (Term III)}\nonumber \\
&+\big(1-q^\epsilon_{R,T}(n)\big)\int\limits_T^\infty  \sum\limits_{{\bf i} \not\in R}  {E[\zeta^{[n]}_{i_1,i_2}(t)~|~Q^\epsilon_{R,T,n}] \over n} e^{-t/n} dt & \text{\bf (Term IV)}\nonumber \\
&+q^\epsilon_{R,T}(n)\int\limits_T^\infty  \sum\limits_{i_1,i_2}  {E[\zeta^{[n]}_{i_1,i_2}(t)~|~\overline{Q^\epsilon_{R,T,n}}] \over n} e^{-t/n} dt & \text{\bf (Term V)}
\end{align}
Next, we estimate the terms {\bf I-V} in \eqref{4intCross}.

\medskip
\noindent 
{\bf Term I.} As we establish in \eqref{crossAS} of Section \ref{sec:weakconv}, 
$~\lim\limits_{n \to \infty}\!\sup\limits_{s \in [0,T]} \!\left|n^{-1}\zeta_{i_1,i_2}^{[n]}(s)-\zeta_{i_1,i_2}(s)\right| = 0 ~~a.s.$ 
on $[0,T]$ for all ${\bf i}=\left[\!\!\begin{array}{c}i_1 \\i_2\end{array}\!\!\right] \in R$. Therefore,
$$\lim\limits_{n \rightarrow \infty} \int\limits_0^T  \sum\limits_{{\bf i} \in R}  {E[\zeta^{[n]}_{i_1,i_2}(t)] \over n} e^{-t/n} dt  = \sum\limits_{{\bf i} \in R} \int\limits_0^T  \zeta_{i_1,i_2}(t) dt.$$

\medskip
\noindent 
{\bf Term II.} Observe that,
\begin{align*}
 \sum\limits_{{\bf i} \not\in R} {\zeta^{[n]}_{i_1,i_2}(t) \over n} &\leq {1 \over n}\sum\limits_{i_1 >K_1}\sum\limits_{i_2} \zeta^{[n]}_{i_1,i_2}(t)+{1 \over n}\sum\limits_{i_1}\sum\limits_{i_2 >K_2} \zeta^{[n]}_{i_1,i_2}(t)\\
& \leq {1 \over K_1n}\sum\limits_{i_1 >K_1}\sum\limits_{i_2} i_1\zeta^{[n]}_{i_1,i_2}(t)+{1 \over nK_2}\sum\limits_{i_1}\sum\limits_{i_2 >K_2} i_2\zeta^{[n]}_{i_1,i_2}(t)\\
 & \leq  {\alpha[n] \over K_1n}+{\beta[n] \over nK_2} \leq 2{\alpha \over K_1}+2{\beta \over K_2}\\
 \end{align*}
for all $n$ large enough. Thus,
$$\int\limits_0^T  \sum\limits_{{\bf i} \not\in R} {E[\zeta^{[n]}_{i_1,i_2}(t)] \over n} e^{-t/n} dt=O\left({T \over K_1}\right)+O\left({T \over K_2}\right).$$

\medskip
\noindent 
{\bf Term III.} 
We define the {\it R-gel} to be the collection of all clusters whose mass vector is not in $R$. Let 
\begin{equation}\label{eqn:MRgel}
M_{R\,gel}(t)=\left[\!\!\begin{array}{c}m_1(t) \\m_2(t)\end{array}\!\!\right]
\end{equation} 
denote the total mass vector of all clusters in the {\it R-gel} at time $t \geq 0$.

\medskip
\noindent
Now, conditioning on the event $Q^\epsilon_{R,T,n}$, we have $m_1(t) \geq \alpha(1-\epsilon)n$ and $m_2(t) \geq \beta(1-\epsilon)n$ for all $t \geq T$, and $n$ large enough. Thus each cluster in $R$ will be gravitating toward the {\it R-gel} with the rate of at least $(\alpha \wedge \beta)(1-\epsilon)$. 
Consider a cluster in $R$ at time $T$. Let $T+L$ be the time it becomes a part of the {\it R-gel}. Then, its contribution to the integral 
$~\int\limits_T^\infty  \sum\limits_{{\bf i} \in R}  {E[\zeta^{[n]}_{i_1,i_2}(t)~|~Q^\epsilon_{R,T,n}] \over n} e^{-t/n} dt~$ is at most
$$\int\limits_T^\infty  {E[{\bf 1}_{[T,T+L]}(t)~|~Q^\epsilon_{R,T,n}] \over n} e^{-t/n} dt \leq {E[L~|~Q^\epsilon_{R,T,n}] \over n}e^{-T/n} \leq {1 \over (\alpha \wedge \beta)(1-\epsilon)n}.$$

\medskip
\noindent
The number of clusters in $R$ at time $t \geq T$ is
$$ \sum\limits_{{\bf i} \in R}  \zeta^{[n]}_{i_1,i_2}(t) \leq \sum\limits_{{\bf i} \in R}  (i_1+i_2)\zeta^{[n]}_{i_1,i_2}(t)  \leq (\alpha+\beta)\epsilon n.$$
Therefore,
$$\int\limits_T^\infty  \sum\limits_{{\bf i} \in R}  {E[\zeta^{[n]}_{i_1,i_2}(t)~|~Q^\epsilon_{R,T,n}] \over n} e^{-t/n} dt \leq {(\alpha+\beta)\epsilon n \over (\alpha \wedge \beta)(1-\epsilon)n}={2\epsilon \over 1-\epsilon } < 3\epsilon. $$

\medskip
\noindent 
{\bf Term IV.} We let $\mathcal{C}=\{C_1,C_2,C_3,\hdots, C_M \}$ denote the set of all clusters whose mass vectors ever exceeded $K_1$ in the first coordinate and/or  ever exceeded $K_2$ in the second coordinate in the history of the process ${\bf ML}_n(t)$, i.e., all clusters that were ever a part of {\it R-gel}. The number of clusters in $\mathcal{C}$ is less than $\alpha[n]/K_1+\beta[n]/K_2$. For each $C_i$, the emergence time $a_i$ is the time of a merger of a pair of clusters in $R$, resulting in appearance of a new cluster $C_i$ in {\it R-gel}. We enumerate these clusters in the order they emerge. 

\medskip
\noindent
Let $M_i(t)=\left[\!\!\begin{array}{c}m_{1,i}(t) \\m_{2,i}(t)\end{array}\!\!\right]$ denote the mass vector of cluster $C_i$ at time $t$. Consider a pair of  clusters, $C_i$ and $C_j$, coexisting in the {\it R-gel} at time $t$, such that $m_{1,i}, m_{1,j} < \alpha n/2$ and $m_{2,i}, m_{2,j} < \beta n/2$. We split their merger rate into two by saying that $C_i$ absorbs $C_j$ with rate ${1 \over 2n} \big(m_{1,i}(t)m_{2,j}(t)+m_{2,i}(t)m_{1,j}(t)\big)$, and $C_j$ absorbs $C_i$ with rate ${1 \over 2n} \big(m_{1,i}(t)m_{2,j}(t)+m_{2,i}(t)m_{1,j}(t)\big)$.  

\medskip
\noindent
There is a finite stopping time 
$$t^*=\min\{t \geq 0 ~:~\exists C_i \in \mathcal{C} ~\text{ with } m_{1,i}(t) \geq \alpha n/2~\text{ or }~ m_{2,i}(t) \geq \beta n/2 \}$$ 
when a cluster $C_{i^*}$ has its mass vector satisfying either $m_{1,i^*}(t^*) \geq \alpha n/2$ or $m_{2,i^*}(t^*) \geq \beta n/2$. 
After time $t^*$ the rules of interactions of cluster $C_{i^*}$ with the other clusters in $\mathcal{C}$ change as follows. For $t > t^*$,
$C_{i^*}$ absorbs $C_j$ with rate ${1 \over n} \big(m_{1,i^*}(t)m_{2,j}(t)+m_{2,i^*}(t)m_{1,j}(t)\big)$, while $C_{i^*}$ itself cannot be absorbed by any other cluster in $\mathcal{C}$.

\medskip
\noindent
Let $b_i$ denote the time when cluster $C_i$ is absorbed by another cluster in collection $\mathcal{C}$. Naturally, there will be only one survivor $C_{i^*}$ with $b_{i^*}=\infty$. Let $J_i=[a_i,b_i) \cap [T,\infty)$ denote the lifespan of cluster $C_i$. Note that a cluster $C_i$ from the collection $\mathcal{C}$ existing at time $t \in [a_i,b_i)$ is absorbed into one of the clusters in the {\it R-gel} with the total instantaneous rate of
$$\lambda_i(t) \geq {1 \over 2n} \Big(m_{1,i}(t)\big(m_2(t) -m_{2,i}(t)\big)+m_{2,i}(t)\big(m_1(t)-m_{1,i}(t) \big)\Big),$$
where $m_1(t)$ and $m_2(t)$ are as defined in \eqref{eqn:MRgel}.
Conditioning on the event $Q^\epsilon_{R,T,n}$ defined in \eqref{eqn:eventQcross}, we have that if $m_{1,i}(t) < \alpha n/2$ and  $m_{2,i}(t) < \beta n/2$ for $t \in J_i$, then the rate of absorption of $C_i$ into the {\it R-gel} is  
\begin{align*}
\lambda_i(t) &\geq {1 \over 2n} m_{1,i}(t)\beta\left((1-\epsilon)n -{1 \over 2}n\right)+{1 \over 2n} m_{2,i}(t)\alpha\left((1-\epsilon)n -{1 \over 2}n\right)\\ 
&\geq {1 \over 2n} m_{1,i}(t)\beta \left({3 \over 4}n -{1 \over 2}n\right)+{1 \over 2n} m_{2,i}(t)\alpha \left({3 \over 4}n -{1 \over 2}n\right)\\
&\geq {m_{1,i}(t)\beta+m_{2,i}(t)\alpha \over 8}  > {K_1\beta+K_2\alpha \over 8}.
\end{align*}

\medskip
\noindent
Next, 
\begin{equation}\label{eqn:giantCross}
\int\limits_T^\infty  \sum\limits_{{\bf i} \not\in R}  {E[\zeta^{[n]}_{i_1,i_2}(t)~|~Q^\epsilon_{R,T,n}] \over n} e^{-t/n} dt  = \int\limits_T^\infty  {1 \over n} e^{-t/n} dt +\mathcal{E}
\end{equation}
where $\int\limits_T^\infty  {1 \over n} e^{-t/n} dt$ is due to the event $Q^\epsilon_{R,T,n}$ which guarantees the existence of at least one component from $\mathcal{C}$ in the {\it R-gel} for all $t \in [T,\infty)$ and the second term $\mathcal{E}$ is responsible for all the times $t \geq T$ when the number of clusters in the {\it R-gel}  is greater than one. The term $\mathcal{E}$ is bounded as follows 
$$\mathcal{E} \leq \int\limits_T^\infty  {E\left[\sum\limits_{i:~i\not= i^*} {\bf 1}_{J_i}(t)~\big|~Q^\epsilon_{R,T,n} \right] \over n} e^{-t/n} dt.$$
Now, each cluster $C_i$ is gravitating towards the rest of the {\it R-gel} with the rate of at least ${K_1\beta+K_2\alpha \over 8}$. 
Thus, for each $i\not=i^*$,
$$\int\limits_T^\infty  {E\left[{\bf 1}_{J_i}(t)~|~Q^\epsilon_{R,T,n} \right] \over n} e^{-t/n} dt \leq {E[|J_i| ~|~Q^\epsilon_{R,T,n}]\over n} e^{-{T \over n}} \leq {8 \over n(K_1\beta+K_2\alpha)}.$$
Hence, since the cardinality of set $\mathcal{C}$ is $M < \alpha[n]/K_1+\beta[n]/K_2$,
$$\mathcal{E} < (\alpha[n]/K_1+\beta[n]/K_2) \cdot {8 \over n(K_1\beta+K_2\alpha)}={8(\alpha/K_1+\beta/K_2) \over K_1\beta+K_2\alpha}+o(1),$$
and from \eqref{eqn:giantCross}, we obtain
$$\int\limits_T^\infty  \sum\limits_{{\bf i} \not\in R}  {E[\zeta^{[n]}_{i_1,i_2}(t)~|~Q^\epsilon_{R,T,n}] \over n} e^{-t/n} dt =1+O(K_1^{-2})+O(K_2^{-2})+O\left({T \over n}\right)+o(1) ~\text{ as }n \rightarrow \infty.$$

\medskip
\noindent 
{\bf Term V.} Here
\begin{align*}
q^\epsilon_{R,T}(n)\int\limits_T^\infty  \sum\limits_{i_1,i_2}  {E[\zeta^{[n]}_{i_1,i_2}(t)~|~\overline{Q^\epsilon_{R,T,n}}] \over n} e^{-t/n} dt  &\leq q^\epsilon_{R,T}(n) \int\limits_T^\infty   {\alpha[n]+\beta[n] \over n} e^{-t/n} dt\\
&\leq (\alpha[n]+\beta[n])q^\epsilon_{R,T}(n) ~=O(n^{-1})
\end{align*}
as $q^\epsilon_{R,T}(n)=O(n^{-2})$.

\bigskip
\noindent
Finally, by putting together the analysis in {\bf Terms I-V} in the equation \eqref{4intCross}, we obtain for a given fixed $\epsilon \in (0,1/4)$, sufficiently large fixed $T \gg T_{gel}$ satisfying \eqref{eqn:TepsilonCross}, and arbitrarily large $K_1$ and $K_2$,
\begin{align}\label{eqn:rigorKnCross}
\int\limits_0^\infty  \sum\limits_{i_1,i_2}  {E[\zeta^{[n]}_{i_1,i_2}(t)] \over n} e^{-t/n} dt=&\sum\limits_{{\bf i} \in R(K_1,K_2)} \int\limits_0^T  \zeta_{i_1,i_2}(t) dt+1 +O\left({T \over K_1}\right)+O\left({T \over K_2}\right)\nonumber \\
&+O(K_1^{-2})+O(K_2^{-2})+O(\epsilon)+O\left({T \over n}\right)+O(n^{-1}),
\end{align}
which when we increase $n$  to infinity will yield
$$\lim\limits_{n \rightarrow \infty}   \int\limits_0^\infty  \sum\limits_{i_1,i_2}  {E[\zeta^{[n]}_{i_1,i_2}(t)] \over n} e^{-t/n} dt=\sum\limits_{i_1,i_2}  \int\limits_0^\infty \zeta_{i_1,i_2}(t)dt+1.$$
\end{proof}

\section{Hydrodynamic limits for Marcus-Lushnikov processes}\label{sec:weakconv}

In \cite{Kurtz81} and \cite{EK}, a certain class of Markov processes, called {\it density dependent population process}, was considered. These are jump Markov processes which depend on a certain parameter $n$ which can be interpreted depending on the context of a model. Usually it represents the population size. Many coalescent processes can be restated as a case of density dependent population processes if all cluster weights are integers. There, the total mass $n$ is the parameter representing the population size. Specifically, we may assume that the coalescent process starts with $n$ clusters of unit mass each (aka singletons). In Kurtz \cite{Kurtz81} and in Chapter 11 of Ethier and Kurtz \cite{EK}, the law of large numbers and the central limit theorems were established for such density dependent population processes as $n \rightarrow \infty$. In this section we will adopt these weak limit laws for the multiplicative and cross-multiplicative coalescent processes.

\subsection{Density dependent population processes}
We first formulate the framework for the  convergence result of Kurtz as stated in Theorem 2.1 in Chapter 11 of \cite{EK} (Theorem 8.1 in \cite{Kurtz81}).
There, the {\it density dependent population processes} are defined as continuous time Markov processes with state spaces in $\mathbb{Z}^d$, and transition intensities represented as follows
\begin{equation}\label{tintK}
q^{(n)}(k, k+\ell) = n \left[ \beta_{\ell} \left( \frac{k}{n} \right) + O\left( \frac{1}{n} \right) \right],
\end{equation}
where $\ell, k \in \mathbb{Z}^d$, and $\beta_\ell$ is a given collection of rate functions. 

\medskip
\noindent
In Section 5.1 of \cite{Aldous}, Aldous observes that the results from Chapter 11 of Ethier and Kurtz \cite{EK} can be used to prove the weak convergence of  a Marcus-Lushnikov  process to the solutions of Smoluchowski system of equations in the case when the Marcus-Lushnikov process can be formulated as a finite dimensional density dependent population process. 
Specifically,  the  Marcus-Lushnikov processes  corresponding to the multiplicative and Kingman coalescent with the monodisperse initial conditions ($n$ singletons) can be represented as finite dimensional density dependent population processes defined above.

\medskip
\noindent
Define $F(x)=\sum\limits_\ell \ell \beta_\ell (x)$. Then, Theorem 2.1 in Chapter 11 of \cite{EK} (Theorem 8.1 in \cite{Kurtz81}) states the following law of large numbers. 
Let $\hat{X}_n(t)$ be the Markov process with the intensities $q^{(n)}(k, k+\ell)$ given in \eqref{tintK}, and let $X_n(t)=n^{-1}\hat{X}_n(t)$. Finally, let $|x|=\sqrt{\sum x_i^2 }$ denote the Euclidean norm in $\mathbb{R}^d$.

\begin{thm} \label{kurtzT}
Suppose for all compact $\mathcal{K} \subset \mathbb{R}^d$,
$$\sum_{\ell} |\ell| \sup_{x \in \mathcal{K}} \beta_\ell (\bar x) < \infty,$$
and there exists $M_\mathcal{K}>0$ such that
\begin{equation}\label{KurtzLipschitz}
|F(x)-F(y)| \leq M_\mathcal{K}|x-y|, \qquad \text{ for all }x,y \in \mathcal{K}.
\end{equation}
Suppose $\lim\limits_{n \to \infty}X_n(0)=x_0$, and $X(t)$ satisfies
\begin{equation}\label{KurtzDE}
X(t) = X(0) + \int_0^t F(X(s)) ds,
\end{equation}
for all $T \geq 0$. Then
\begin{equation}\label{KurtzAS}
\lim\limits_{n \to \infty} \sup\limits_{s \in [0,T]} |X_n(s)-X(s)|=0 \qquad \text{ a.s. }
\end{equation}
\end{thm}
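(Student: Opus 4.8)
The plan is to follow the random time-change (Poisson) representation of jump Markov processes due to Kurtz, and then close the argument with Gronwall's inequality. First I would realize the unscaled process $\hat X_n = n X_n$ through a family of independent unit-rate Poisson processes $\{Y_\ell\}_\ell$, writing
$$\hat X_n(t) = \hat X_n(0) + \sum_\ell \ell\, Y_\ell\!\left(\int_0^t n\,\beta_{\ell,n}(X_n(s))\,ds\right),$$
where $n\beta_{\ell,n}(x) = q^{(n)}(nx, nx+\ell)$, so that $\beta_{\ell,n}(x)=\beta_\ell(x)+O(1/n)$ uniformly on compacts by (\ref{tintK}). Dividing by $n$ and replacing each $Y_\ell(u)$ by $u + \tilde Y_\ell(u)$, where $\tilde Y_\ell(u)=Y_\ell(u)-u$ is the centered (martingale) Poisson process, splits the evolution into a drift and a fluctuation part,
$$X_n(t) = X_n(0) + \int_0^t F_n(X_n(s))\,ds + M_n(t),$$
with $F_n(x)=\sum_\ell \ell\,\beta_{\ell,n}(x)=F(x)+O(1/n)$ and $M_n(t)=\sum_\ell \tfrac{\ell}{n}\,\tilde Y_\ell\!\big(\int_0^t n\beta_{\ell,n}(X_n(s))\,ds\big)$.

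The next step is to subtract the integral equation (\ref{KurtzDE}) for $X(t)$ and estimate. Writing $F_n(X_n)-F(X) = [F_n(X_n)-F(X_n)] + [F(X_n)-F(X)]$, the first bracket is $O(1/n)$ uniformly on a compact set and the second is controlled by the Lipschitz bound (\ref{KurtzLipschitz}), yielding
$$\sup_{u\le t}|X_n(u)-X(u)| \le \delta_n(T) + M_\mathcal{K}\int_0^t \sup_{u\le s}|X_n(u)-X(u)|\,ds,$$
where $\delta_n(T) := |X_n(0)-X(0)| + \sup_{s\le T}|M_n(s)| + O(T/n)$. Gronwall's inequality then gives $\sup_{t\le T}|X_n(t)-X(t)|\le \delta_n(T)\,e^{M_\mathcal{K}T}$, reducing everything to showing $\delta_n(T)\to 0$ almost surely. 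The initial-condition term vanishes since $X_n(0)\to x_0=X(0)$, and the $O(T/n)$ term is deterministic; the fluctuation term vanishes by the functional strong law of large numbers for the Poisson process, $n^{-1}\sup_{u\le U}|\tilde Y_\ell(nu)|\to 0$ a.s., applied after bounding the random clocks $\int_0^t n\beta_{\ell,n}(X_n(s))\,ds$ by $nU_\ell$ for constants $U_\ell$ summable against $|\ell|$ thanks to the hypothesis $\sum_\ell |\ell|\sup_{x\in\mathcal{K}}\beta_\ell(x)<\infty$.

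The main obstacle is that $F$ is only \emph{locally} Lipschitz, so both the Lipschitz step and the boundedness of the rates require the trajectories to stay in a fixed compact set. I would handle this by localization: since $X(t)$ is continuous, $X([0,T])$ lies in some ball, so I fix a compact $\mathcal{K}$ containing an open neighborhood of this image and introduce the exit time $\tau_n=\inf\{t:X_n(t)\notin\mathcal{K}\}$, running all estimates on $[0,T\wedge\tau_n]$, where the rates are bounded and $F$ is Lipschitz. Once the Gronwall bound forces $\sup_{t\le T\wedge\tau_n}|X_n(t)-X(t)|\to 0$ a.s., $X_n$ cannot reach $\partial\mathcal{K}$ for large $n$, so $\tau_n>T$ eventually and the localized estimate upgrades to the full statement (\ref{KurtzAS}). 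Care is also needed to interchange the countable sum over $\ell$ with the limits, which is again where the summability hypothesis enters, this time to guarantee uniform convergence of the fluctuation series.
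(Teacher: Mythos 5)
Your proposal is correct, but note that the paper does not prove this statement at all: it is quoted verbatim as Theorem 2.1 of Chapter 11 of Ethier--Kurtz (Theorem 8.1 of Kurtz, 1981) and used as a black box. Your sketch --- the Poisson random time-change representation, splitting into drift plus centered-Poisson martingale, Gronwall's inequality, the functional strong law for $n^{-1}\tilde Y_\ell(n\,\cdot)$, truncation of the sum over $\ell$ via the summability hypothesis, and localization via an exit time to upgrade the local Lipschitz bound --- is precisely the argument in that source, and it is also the same decomposition the paper itself reuses in equation (\ref{Vrep}) and the Gronwall step (\ref{eq:gronwall}) when proving Proposition \ref{prop:on}. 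So the proposal is sound and consistent with the (cited) proof; no gaps beyond the standard technicality, which you flag, of requiring the $O(1/n)$ corrections in (\ref{tintK}) to be summable against $|\ell|$.
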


\noindent


\subsection{Hydordynamic limit for multiplicative coalescent process}\label{sec:multWL}

Consider a multiplicative coalescent process with kernel $K(i,j)=ij$.
Recall that in the definition of a coalescent process given in Sect.~\ref{sec:mult}, a pair of clusters with masses $i$ and $j$ coalesces at the rate $K(i,j)/n$.
Consider the corresponding Marcus-Lushnikov process 
$${\bf ML}_n(t)=\Big(\zeta_1^{[n]}(t),\zeta_2^{[n]}(t),\hdots, \zeta_n^{[n]}(t), 0,0,\hdots \Big)$$ 
that keeps track for the numbers of clusters in each weight category. 
There, the initial conditions will be ${\bf ML}_n(0)=(n,0,0,\hdots)=ne_1$, where $e_i$ denotes the $i$-th coordinate vector.

\bigskip
\noindent
Next, for a fixed positive integer $K$, let $\hat{X}_n(t)$ be the restriction of process ${\bf ML}_n(t)$ to the first $K$ dimensions, i.e.
$$\hat{X}_n(t)=\Big(\zeta_1^{[n]}(t),\zeta_2^{[n]}(t),\hdots, \zeta_K^{[n]}(t) \Big)$$
with the initial conditions $\hat{X}_n(0)=ne_1$.
Apparently, $\hat{X}_n(t)$ is itself a  (finite dimensional) Markov process with the following transition rates of $\hat{X}_n(t)$ stated as in \eqref{tintK}.  Let $x=(x_1,x_2,\hdots, x_K)$. 
Then, for any pair $1\leq i< j \leq K$, the change vector $\ell=- e_i-e_j+ e_{i+j}{\bf 1}_{i+j \leq K}$ corresponding to a merger of clusters of respective sizes $i$ and $j$ is assigned the rate
$$q^{(n)}(x , x +\ell)={ij \over n}x_i x_j = n \beta_{\ell} \left({x \over n}\right),$$
where $\beta_{\ell}(x)=ijx_ix_j $. 

For a given $1\leq i \leq K$, the change vector $\ell=-2e_i+e_{2i}{\bf 1}_{2i \leq K}$ corresponding to a merger of a pair of clusters of size $i$
is assigned the rate
$$q^{(n)}(x , x +\ell)={1 \over n} \left[\frac{i^2 x_i^2}{2}-\frac{i^2x_i}{2}\right]= n\left[ \beta_{\ell}\left({x \over n}\right)+ O\left( \frac{1}{n} \right) \right],$$
where $\beta_{\ell} (x)=i^2\frac{x_i^2}{2}$.

For a given $1\leq i \leq K$, the change vector $\ell=-e_i$ corresponding to a cluster of mass $i$ merging with a cluster of mass greater than $K$ is assigned the rate
$$q^{(n)}(x , x +\ell)={1 \over n}ix_i \left[n-\sum\limits_{j=1}^K jx_j \right]= n\beta_{\ell}\left({x \over n}\right),$$
where $\beta_{\ell} (x)=ix_i \left(1-\sum\limits_{j=1}^K j x_j \right)$.

\bigskip
\noindent
Then, by Theorem \ref{kurtzT}, $X_n(t)=n^{-1}\hat{X}_n(t)$ converges to $X(t)$ as in \eqref{KurtzAS}, where $X(t)$ satisfies \eqref{KurtzDE} with
\begin{align}\label{Kbl}
F(x) := \sum\limits_\ell \ell \beta_\ell (x)=& \sum_{ij:~1\leq i < j \leq K} ijx_i x_j [- e_i-e_j+ e_{i+j}{\bf 1}_{i+j \leq K}] \nonumber\\
&+{1 \over 2}\sum_{i=1}^K i^2x_i^2 [-2e_i+e_{2i}{\bf 1}_{2i \leq K}] -\sum_{i=1}^K ix_i \left(1-\sum\limits_{j=1}^K jx_j \right) e_i \nonumber \\
&= \sum_{i=1}^K \left(-ix_i+  {1 \over 2}\sum_{\substack{1 \leq i_1,i_2\leq K \\  i_1+i_2=i}}i_1i_2 x_{i_1} x_{i_2}\right)e_i .
\end{align}
Here, $F(x)$ is naturally satisfying the Lipschitz continuity conditions \eqref{KurtzLipschitz}, and the initial conditions $X(0)=X_n(0)=e_1$.

\medskip
\noindent
Observe that the system of equations \eqref{KurtzDE} with $F(x)$ as in (\ref{Kbl}) will yield the reduced system of Smoluckowski coagulation equations \eqref{eqn:Flory} also known as the Flory coagulation system \cite{Flory}. Thus, for a given integer $K>0$ and a fixed real $T>0$,
\begin{equation}\label{multAS}
\lim\limits_{n \to \infty} \sup\limits_{s \in [0,T]} \left|n^{-1}\zeta_k^{[n]}(s)-\zeta_k(s)\right|=0 \qquad \text{ a.s.}
\end{equation}
for $k=1,2,\hdots,K$.

\medskip
\noindent
Note that the above limit no longer requires a fixed $K$ for each individual $k$ in \eqref{multAS}. However, we mainly use the following limit in our calculations,
\begin{equation}\label{multAS2}
\lim\limits_{n \to \infty} \sup\limits_{s \in [0,T]} \left| \sum\limits_{k=1}^K n^{-1}\zeta_k^{[n]}(s)-\sum\limits_{k=1}^K \zeta_k(s)\right|=0 \qquad \text{ a.s.}
\end{equation}

\subsection{Hydordynamic limit for cross-multiplicative coalescent processes}\label{sec:hydcross}

Fix integers $K_1>0$ and $K_2>0$, and let $R:=R(K_1,K_2)=\{1,2,\hdots,K_1\} \times \{1,2,\hdots,K_2\}$. Let $e_{\bf i}$ be the standard basis vectors in $\mathbb{R}^{K_1K_2}$, enumerated by ${\bf i}=\left[\!\!\begin{array}{c}i_1 \\i_2\end{array}\!\!\right] \in R$. Consider a restriction  to  $\left[\!\!\begin{array}{c}i_1 \\i_2\end{array}\!\!\right] \in R$ of a Marcus-Lushnikov process $\zeta_{i_1,i_2}(t)$ with the cross-multiplicative kernel.
Let
$${\hat X}_n(t)=\Big\{\zeta_{i_1,i_2}^{[n]}(t) \Big\}_{{\bf i}\in R}$$
with the initial conditions ${\hat X}_n(0)=\alpha[n]e_{0'}+\beta[n]e_{0''}$, where $0'=\left[\!\!\begin{array}{c}1 \\ 0\end{array}\!\!\right]$ and $0''=\left[\!\!\begin{array}{c}0 \\ 1\end{array}\!\!\right]$.

\bigskip
\noindent
We observe the following transition rates of ${\hat X}_n(t)$ stated as in \eqref{tintK}.  Let $x=\sum\limits_{{\bf i}\in R} x_{\bf i}e_{\bf i}$. 
Then, for any ${\bf i}$ and ${\bf j}$ in $R$, the change vector $\ell=-e_{\bf i} - e_{\bf j} +{\bf 1}_{\{{\bf i+j}\in R\}}e_{\bf i+j}$ corresponding to a merger of clusters of respective weights ${\bf i}$ and ${\bf j}$ is assigned the rate
$$q^{(n)}(x , x +\ell)={1 \over n}(i_1j_2+i_2j_1)x_{\bf i} x_{\bf j} = n \beta_{\ell} (x),$$
where $\beta_{\ell} (x)=(i_1j_2+i_2j_1)x_{\bf i} x_{\bf j} $. 

For a given ${\bf i} \in R$, the change vector $\ell=-e_{\bf i}$ corresponding to the merger of clusters whose weight vector is ${\bf i}$ with clusters whose weight vectors are not in $R$ is assigned  the rate
$$q^{(n)}(x , x +\ell)={1 \over n} \left[i_1x_{\bf i}\left(\beta[n]-\sum_{{\bf j}\in R}  j_2x_{\bf j}\right)+i_2 x_{\bf i}\left(\alpha[n]-\sum_{{\bf j}\in R}  j_1x_{\bf j}\right)\right] = n\left[\beta_{\ell}(x) + O\left( \frac{1}{n} \right) \right] ,$$
where $\beta_{\ell} (x)=i_1x_{\bf i}\left(\beta-\sum_{{\bf j}\in R}  j_2x_{\bf j}\right)+i_2 x_{\bf i}\left(\alpha-\sum_{{\bf j}\in R}  j_1x_{\bf j}\right)$.

\bigskip
\noindent
Thus, by Theorem \ref{kurtzT},  $X_n(t)$ converges to $X(t)$ as in \eqref{KurtzAS}, where $X(t)$ satisfies \eqref{KurtzDE} with
\begin{align}\label{cm}
F(x) := \sum_{\ell} \ell \beta_{\ell} (x) \nonumber = & {1 \over 2}\sum_{{\bf i,j}\in R} \left[ -e_{\bf i} - e_{\bf j} + {\bf 1}_{\{{\bf i+j}\in R\}} e_{\bf i+j}  \right]  (i_1j_2+i_2j_1)x_{\bf i} x_{\bf j} \nonumber \\
& - \sum_{{\bf i}\in R} e_{\bf i} i_1x_{\bf i}\left(\beta-\sum_{{\bf j}\in R}  j_2x_{\bf j}\right) -\sum_{{\bf i}\in R} e_{\bf i} i_2 x_{\bf i}\left(\alpha-\sum_{{\bf j}\in R}  j_1x_{\bf j}\right) \nonumber \\
=&\sum_{{\bf i}\in R} e_{\bf i} \left(-(\beta i_1+ \alpha i_2)x_{\bf i}+ {1 \over 2}\sum\limits_{\bf \ell, k:~ \ell+k=i} (\ell_1 k_2+\ell_2 k_1)x_{\bf \ell} x_{\bf k} \right)
\end{align}
for a fixed $T>0$.
The system of equations \eqref{KurtzDE} with $F(x)$ given in \eqref{cm} will yield the reduced system of Smoluckowski coagulation equations  \eqref{eqn:Knnt}.
So, for a fixed a pair of positive integers $K_1$ and $K_2$, and a fixed real number $T>0$,
\begin{equation}\label{crossAS}
\lim\limits_{n \to \infty} \sup\limits_{s \in [0,T]} \left|n^{-1}\zeta_{i_1,i_2}^{[n]}(s)-\zeta_{i_1,i_2}(s)\right|=0 \qquad \text{ a.s.}
\end{equation}
for all $\left[\!\!\begin{array}{c}i_1 \\i_2\end{array}\!\!\right] \in R$. Consequently,
\begin{equation}\label{crossAS2}
\lim\limits_{n \to \infty} \sup\limits_{s \in [0,T]} \left| n^{-1}\sum\limits_{\substack{1 \leq i_1 \leq K_1\\1 \leq i_2 \leq K_2}}\zeta_{i_1,i_2}^{[n]}(s)-\sum\limits_{\substack{1 \leq i_1 \leq K_1\\1 \leq i_2 \leq K_2}} \zeta_{i_1,i_2}(s)\right|=0 \qquad \text{ a.s.}
\end{equation}

\subsection{Central Limit Theorem and related results}\label{sec:CLT}

The usefulness of the framework set in \cite{EK,Kurtz81}  for proving weak convergence is that the law of large numbers Theorem \ref{kurtzT} is enhanced with the corresponding central limit theorem (see Theorem \ref{KurtzCLT} below) and the large deviation theory \cite{FK}.
The following central limit theorem is derived in Theorem 8.2 in \cite{Kurtz81} (and Theorem 2.3 in Chapter 11 of \cite{EK}). 
\begin{thm}\label{KurtzCLT}
Suppose for all compact $\mathcal{K} \subset \mathbb{R}^d$,
\begin{equation}\label{cond4}
 \sum_{\ell} |\ell|^2 \sup_{x \in \mathcal{K}} \beta_\ell (x) < \infty
\end{equation}
and that the $\beta_\ell$ and $\partial F$ are continuous. Suppose $X_n$ and $X$ are as  in Theorem \ref{kurtzT}, and suppose $V_n=\sqrt{n} (X_n - X)$ is such 
that $\lim_{n\to\infty} V_n(0) = V(0)$, where $V(0)$ is a constant. Then $V_n$ converges in distribution to $V$, which is the solution of 
\begin{equation}\label{VrepLim}
V(t) = V(0) + U(t) +  \int_0^t \partial F(X(s))V(s) ds, 
\end{equation}
where $U(t)$ is a Gaussian process and $\partial F(X(s))=  (\partial_j  F_i(X(s)))_{i,j} $.
\end{thm}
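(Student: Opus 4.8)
The plan is to prove this by Kurtz's random time change representation together with a martingale functional central limit theorem, following \cite{Kurtz81} and Chapter 11 of \cite{EK}. First I would write the jump process $\hat X_n=nX_n$ in its Poisson representation: with $\{Y_\ell\}$ a family of independent unit-rate Poisson processes indexed by the jump vectors $\ell$, and $\tilde Y_\ell(u)=Y_\ell(u)-u$ the associated centered martingales,
\[
X_n(t)=X_n(0)+\sum_\ell \ell\,\frac{1}{n}\tilde Y_\ell\!\left(\int_0^t q^{(n)}(\hat X_n(s),\hat X_n(s)+\ell)\,ds\right)+\int_0^t F(X_n(s))\,ds+\mathcal E_n(t),
\]
where the drift uses $n^{-1}q^{(n)}=\beta_\ell+O(1/n)$ from (\ref{tintK}) and $\mathcal E_n(t)=O(t/n)$ collects the $O(1/n)$ corrections. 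Subtracting the deterministic equation (\ref{KurtzDE}) and multiplying by $\sqrt n$ gives, with $V_n=\sqrt n(X_n-X)$,
\[
V_n(t)=V_n(0)+U_n(t)+\sqrt n\!\int_0^t\!\big(F(X_n(s))-F(X(s))\big)\,ds+\sqrt n\,\mathcal E_n(t),
\]
where $U_n(t)=\sum_\ell \ell\,n^{-1/2}\tilde Y_\ell(\int_0^t q^{(n)}\,ds)$ is the rescaled martingale part.

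Next I would identify the three surviving contributions. The initial term satisfies $V_n(0)\to V(0)$ by hypothesis, and $\sqrt n\,\mathcal E_n(t)=O(t/\sqrt n)\to 0$. For the drift, I would Taylor expand using continuity of $\partial F$: writing $F(X_n)-F(X)=\partial F(X)(X_n-X)+r_n$, one has $\sqrt n\big(F(X_n(s))-F(X(s))\big)=\partial F(X(s))V_n(s)+\sqrt n\,r_n(s)$, and the a.s.\ convergence $X_n\to X$ on $[0,T]$ from Theorem \ref{kurtzT}, together with continuity of $\partial F$, forces the remainder $\sqrt n\,r_n$ to vanish uniformly. This produces the integral operator $\int_0^t \partial F(X(s))V_n(s)\,ds$ appearing in (\ref{VrepLim}).

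The heart of the argument is the convergence $U_n\Rightarrow U$. Since $n^{-1/2}\tilde Y_\ell(n\,\cdot)\Rightarrow W_\ell$, a standard Brownian motion, by the functional CLT for the Poisson process, and since the random time changes $n^{-1}\int_0^t q^{(n)}\,ds\to\int_0^t\beta_\ell(X(s))\,ds$ by the law of large numbers, a random-time-change argument yields
\[
U_n\Rightarrow U(t)=\sum_\ell \ell\,W_\ell\!\left(\int_0^t\beta_\ell(X(s))\,ds\right),
\]
a mean-zero Gaussian process whose covariance at time $s$ is $\int_0^s \sum_\ell \ell\ell^{\mathsf{T}}\beta_\ell(X(u))\,du$. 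Here the moment condition (\ref{cond4}) is exactly what guarantees the summability over $\ell$, finiteness of the quadratic variation, and the Lindeberg condition required for the martingale CLT.

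The main obstacle is the final passage to the limit in the \emph{coupled} linear equation, since $V_n$ appears inside the integral driving it. I would handle this by viewing the map $u\mapsto v$ defined by $v(t)=V(0)+u(t)+\int_0^t \partial F(X(s))v(s)\,ds$ as a continuous (indeed Lipschitz, by Gronwall) functional of the driving path $u$ on Skorokhod space; the explicit variation-of-constants solution using the fundamental matrix $\Phi$ of $\dot\Phi=\partial F(X)\Phi$ makes this continuity manifest. Combined with tightness of $\{V_n\}$ (which follows from $U_n\Rightarrow U$ and the Gronwall bound) and the continuous mapping theorem, this yields $V_n\Rightarrow V$ with $V$ the unique solution of (\ref{VrepLim}). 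The delicate points are establishing joint convergence of $(U_n,V_n)$ and controlling the Taylor remainder uniformly in $s\in[0,T]$, both of which rest on the continuity assumptions on $\beta_\ell$ and $\partial F$ and on the law of large numbers from Theorem \ref{kurtzT}.
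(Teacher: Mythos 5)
Your proposal is correct and takes essentially the same route as the paper, which does not reprove this theorem but cites Theorem 8.2 of \cite{Kurtz81} (Theorem 2.3 in Chapter 11 of \cite{EK}) and records exactly your opening decomposition as its representation (\ref{Vrep}), namely $V_n(t)=V_n(0)+U_n(t)+\int_0^t\sqrt{n}\big(F(X_n(s))-F(X(s))\big)\,ds$ with $U_n$ built from centered Poisson processes under a random time change. Your remaining steps --- the martingale functional CLT for $U_n$ under (\ref{cond4}), the Taylor expansion of $F$ with the remainder controlled by stochastic boundedness of $V_n$ via Gr\"onwall, and the continuity of the variation-of-constants solution map --- are precisely the standard Kurtz--Ethier argument behind that citation.
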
 

\medskip
\noindent
The proof of Theorem \ref{KurtzCLT} is based on representing $V_n(t)$ as follows. Let $Y_{\ell}$ be independent Poisson processes with rate one. Then,
\begin{equation}\label{Vrep}
V_n(t) = V_n(0) + U_n(t)+ \int_0^t \sqrt{n} \big(F(X_n(s)) - F(X(s))\big) ds,
\end{equation}
where 
$$U_n(t)=\sum_{\ell} \ell W_\ell ^ {(n)} \Bigl( \int_0^ t \beta_\ell (X_n(s)) ds \Bigr),$$
$W_\ell ^ {(n)}(u) = n^ {-1/2} \hat Y_\ell (nu)$, and $~\hat Y_{\ell} (u) := Y_{\ell}(u) -u~$  are centralized Poisson processes. 

\bigskip
\noindent
Next, we will use formula \eqref{Vrep} in order to derive an upper bound \eqref{eqn:quadratic} on probability $P(|X_n(T)-X(T)| \geq \delta)$. Let us consider a simple case of a density dependent population process on $\mathbb{R}^d$  for which the following three conditions are satisfied.
\begin{description}
  \item[i] $V_n=\sqrt{n} (X_n - X)$ is such that $\lim_{n\to\infty} V_n(0) = V(0)$.
  \item[ii] Both $X_n(t)$ and $X(t)$ live on a compact set $\mathcal{K}$.
  \item[iii] There are finitely many vectors $\ell \in \mathbb{R}^d$ such that $\beta_\ell (x)>0$ for some $x \in \mathcal{K}$.
\end{description}
Notice that the above conditions are satisfied for the Marcus-Lushnikov processes considered here, with the general bilinear kernel as in Sect.~\ref{sec:multWL} and with the cross-multiplicative kernel as in Sect.~\ref{sec:hydcross}. Specifically, for a given $m>0$, let
$$\mathcal{K}_m=\Big\{x \in \mathbb{R}_+^d ~:~ \sum\limits_i x_i \leq m \Big\}.$$
Then, in Sect.~\ref{sec:multWL},  $X_n(t),X(t) \in \mathcal{K}_2$, and in Sect.~\ref{sec:hydcross}, $X_n(t),X(t) \in \mathcal{K}_m$ for $m>\alpha+\beta$.

\bigskip
\noindent
\begin{prop}\label{prop:on}
Assuming the above conditions \textrm{\bf i-iii} are satisfied together with the Lipschitz continuity conditions \eqref{KurtzLipschitz}, we have
\begin{equation}\label{eqn:quadratic}
P(|X_n(T)-X(T)| \geq \delta)=O(n^{-2}).
\end{equation}
\end{prop}

\begin{proof}
Here,
$$\sqrt{n} \big|F(X_n(s)) - F(X(s))\big| \leq \sqrt{n}M_\mathcal{K} |X_n(s)-X(s)|=M_\mathcal{K}|V_n(s)|$$
and for a fixed $T>0$ and any $t \leq T$,
$$|V_n(0) +U_n(t)| \leq \varepsilon_n(T):=|V_n(0)| +\sum_{\ell} |\ell | ~\max\left\{ \left|W_\ell ^ {(n)} (s)  \right| ~:~ s \in \big[0,T\sup_{x \in \mathcal{K}} \beta_\ell (x) \big] \right\}.$$
Hence, for a fixed $T>0$, equation \eqref{Vrep} implies the following inequality,
$$|V_n(t)| \leq \varepsilon_n(T)+ M_\mathcal{K}  \int_0^t |V_n(s)| ds \quad \text{ for all } t\in [0,T].$$
Then, by Gr\"{o}nwall's inequality (see Appendix 5 in \cite{EK}),
\begin{equation}\label{eqn:gronwall}
|V_n(t)| \leq \varepsilon_n(T) e^{M_\mathcal{K} t}.
\end{equation}
In particular, we use equation \eqref{eqn:gronwall} together with Markov inequality to obtain the following simple bound for any $\delta>0$,
\begin{equation}\label{eqn:quadraticV}
P(|X_n(T)-X(T)| \geq \delta) \leq {V_n^4(T) \over n^2 \delta^4} \leq {E[\varepsilon_n^4(T)] e^{4M_\mathcal{K} T} \over n^2 \delta^4}.
\end{equation}
Here, for any fixed real $S>0$, integer $r>0$, and any real $\lambda>0$, we have by Doob's martingale inequality, 
$$P\left( \max\limits_{s \in [0,S]}\left|W_\ell ^ {(n)} (s)  \right|^r \geq \lambda \right) =P\left( \max\limits_{s \in [0,S]}\left|W_\ell ^ {(n)} (s)  \right| \geq \lambda^{1/r} \right)
 \leq {E\left[\left(W_\ell ^ {(n)} (S) \right)^{2+2r} \right] \over  \lambda^{2+2/r}}$$
 as $\left|W_\ell ^ {(n)} (s)  \right|$ is a non-negative sub-martingale. Therefore,
\begin{align*}
E\left[\max\limits_{s \in [0,S]}\left|W_\ell ^ {(n)} (s)  \right|^r \right] & \leq 1+\int\limits_1^\infty P\left( \max\limits_{s \in [0,S]}\left|W_\ell ^ {(n)} (s)  \right|^r \geq \lambda \right)  d\lambda \\
 &\leq 1+(1+2/r)E\left[\left(W_\ell ^ {(n)} (S) \right)^{2+2r} \right],
 \end{align*}
where by the classical central limit theorem,
$$\lim\limits_{n \rightarrow \infty} E\left[\left(W_\ell ^ {(n)} (S) \right)^{2+2r} \right]=S^{1+r} E[Z^{2+2r}] ,\qquad Z \text{ - standard normal r.v.}$$
Thus, $E[\varepsilon_n^4(T)]=O(1)$, and \eqref{eqn:quadratic} follows from \eqref{eqn:quadraticV}.
\end{proof}


\section{Discussion: generalizations and open problems.}\label{sec:dscussion}
In this paper we considered an important example of coagulation ODEs obtained as a hydrodynamic limit of a Marcus-Lushnikov process
that tracks the merger history of a coalescent process with two dimensional weight vectors.   
The coagulation equations and gelation in the Marcus-Lushnikov dynamics for other coalescent processes with multidimensional weight vectors is on its own
an interesting object of studies.  As a natural next step, one may consider a generalization of the existing results \cite{Aldous98,Jeon98,Jeon99,Norris99,EMP02,FG04,FL09} on gelation phenomenon for vector weighted processes. 

An extension of the application to minimal spanning trees may come from an observation that the convergence rates in the hydrodynamic limit yield the central limit theorem for $L_n$ on $K_{\alpha[n], \beta[n]}$ similar to the central limit theorem for $L_n$ on $K_n$ proved in Jensen \cite{Janson95}. Specifically, we hope to apply Theorem \ref{KurtzCLT} in the analysis. Moreover, similarly to \cite{CFIJS}, it is possible to examine the second and third order terms in $L_n$.

Finally, genetic recombination is one of the issues facing the use of coalescent processes in genetics as models of genetic drift viewed backwards in time. 
Distinct gene loci would follow different pathways of ancestry, resulting in different gene genealogies.  As a biological application, it is compelling to consider a coalescent process with multidimensional weight vectors as a means of addressing the issue of genetic recombination, and possibly, the issue of biological compatibility. 


\bibliographystyle{amsplain}

\section*{Appendix: Some alternative proofs}

\begin{proof}[Alternative proof of Lemma~\ref{lem:GelationTimeKanbn}.]
We will repeat the approach in \eqref{eqn:multStirling}.
By Stirling's approximation and equation (\ref{eqn:zetasol}), we have
\begin{align}\label{eqn:TgelStirlingsAprox}
\sum\limits_{i_1,i_2} (i_1+i_2)^2\zeta_{i_1,i_2}(t) =&\sum\limits_{i_1,i_2} (i_1+i_2)^2{i_1^{i_2-1}i_2^{i_1-1}\alpha^{i_1}\beta^{i_2} \over i_1!i_2!}e^{-(\beta i_1+ \alpha i_2)t}t^{i_1+i_2-1} \nonumber \\
=&t^{-1}\sum\limits_{i_1,i_2} {(i_1+i_2)^2 \over i_1^{3/2} i_2^{3/2}}i_1^{i_2-i_1}i_2^{i_1-i_2}
e^{-(\beta t-\ln(\alpha t) -1)i_1}e^{-(\alpha t-\ln(\beta t) -1)i_2} \nonumber \\
&\qquad\qquad\qquad\qquad \cdot {1 \over 2\pi}\big(1+O(i_1^{-1})\big)\big(1+O(i_2^{-1})\big),
\end{align}
where
$$i_1^{i_2-i_1}i_2^{i_1-i_2}=e^{-(i_1-i_2)(\ln(i_1)-\ln(i_2))} .$$
Next, we plug in $i_1=x$ and $i_2=cx$ into the exponent in \eqref{eqn:TgelStirlingsAprox}, obtaining
\begin{align}\label{eqn:expTgel}
-(i_1-i_2)\big(\ln(i_1)-\ln(i_2)\big)-&\big(\beta t-\ln(\alpha t) -1\big)i_1-\big(\alpha t-\ln(\beta t) -1\big)i_2 \nonumber \\
&=-\Big[(c\alpha+\beta)t-\ln(\alpha t)-c\ln(\beta t)+(c-1)\ln{c}-(c+1)\Big]x.
\end{align}
The maximal value of the exponent \eqref{eqn:expTgel} is therefore achieved when
\begin{equation}\label{eqn:tfromc}
t={c+1 \over c\alpha+\beta}.
\end{equation}
We plug in the optimal value \eqref{eqn:tfromc} into \eqref{eqn:expTgel} with the exponent in \eqref{eqn:TgelStirlingsAprox} becoming
equal to
\begin{equation}\label{eqn:exp_optimalT}
\left[c\ln\left({(c+1)\beta \over c(c\alpha+\beta)}\right) \,+\ln\left({(c+1)c\alpha \over c\alpha+\beta}\right)\right]x.
\end{equation}
Now, since $\ln(x)$ is a strictly concave function,
\begin{equation}\label{eqn:exp_optimalTconcave}
c\ln\left({(c+1)\beta \over c(c\alpha+\beta)}\right) \,+\ln\left({(c+1)c\alpha \over c\alpha+\beta}\right) \leq (c+1)\ln{1}=0
\end{equation}
with the equality obtained if and only if 
\begin{equation}\label{eqn:optimal_c}
c=\sqrt{\beta \over \alpha}.
\end{equation}
Thus, substituting \eqref{eqn:optimal_c} into \eqref{eqn:tfromc} yields $t={1 \over \sqrt{\alpha \beta}}$.
Indeed, the exponent in \eqref{eqn:TgelStirlingsAprox} may equal zero if and only if $t={1 \over \sqrt{\alpha \beta}}$.
If $t<{1 \over \sqrt{\alpha \beta}}$, the series  \eqref{eqn:TgelStirlingsAprox} converges.
While taking $t={1 \over \sqrt{\alpha \beta}}$, we have the portion of the series \eqref{eqn:TgelStirlingsAprox} corresponding to the indices satisfying 
$i_1\sqrt{\beta}-i_2\sqrt{\alpha}=o(i_1+i_2)$
diverging to infinity since $\sum\limits_i {1 \over i}=\infty$.

\end{proof}

Next, we give an alternative proof of Lemma \ref{lem:novoMcLeod} that uses differential equations approach in the style of \cite{McLeod62}.
\begin{proof}[Alternative proof of Lemma \ref{lem:novoMcLeod}.]
Let $x=u{\partial \over \partial u}s(u,v)$ and $y=v{\partial \over \partial v}s(u,v)$ for all $u,v \geq 0$ for which the series converge. 
The expression for the Jacobian  ${\partial(x,y) \over \partial(u,v)}$ is obtained by first taking the partial derivatives of \eqref{eqn:recSode} and arriving with
$$u{\partial^2 s \over \partial u^2} ={xy \over u(1-y)}-{1-x \over 1-y}v{\partial^2 s \over \partial u \partial v}
~~\text{ and }~~v{\partial^2 s \over \partial v^2} ={xy \over v(1-x)}-{1-y \over 1-x}u{\partial^2 s \over \partial u \partial v}.$$
Next, we substitute the above into the determinant, arriving with
\begin{align}\label{eqn:xytouvJacobian}
{\partial(x,y) \over \partial(u,v)}&={xy \over uv}+y{u \over v}{\partial^2 s \over \partial u^2}+x{v \over u}{\partial^2 s \over \partial v^2}+uv{\partial^2 s \over \partial u^2}{\partial^2 s \over \partial v^2}-uv\left({\partial^2 s \over \partial u \partial v}\right)^2 \nonumber \\
&={xy \over uv}+y{u \over v}{\partial^2 s \over \partial u^2}+x{v \over u}{\partial^2 s \over \partial v^2}+xy{v \over u(1-y)}{\partial^2 s \over \partial v^2}+xy{u \over v(1-x)}{\partial^2 s \over \partial u^2} \nonumber \\
&= {\partial s \over \partial u}{\partial s \over \partial v}\left(1+{u^2\over x(1-x)}{\partial^2 s \over \partial u^2} 
+{v^2 \over y(1-y)}{\partial^2 s \over \partial v^2} \right).
\end{align}
The above expression \eqref{eqn:xytouvJacobian} implies ${\partial(x,y) \over \partial(u,v)}>0$ for $(u,v) \in \mathbb{R}_+^2$ in a small enough neighborhood of $(0,0)$, insuring $x,y<1$.

\medskip
\noindent
Equation \eqref{eqn:recSode} rewrites as
$\,s=x+y-xy\,$
with partial derivatives ${\partial s \over \partial x}=1-y$ and ${\partial s \over \partial y}=1-x$.

\medskip
\noindent
Next, let $\tilde{u}=xe^{-y}$ and $\tilde{v}=ye^{-x}$. The Jacobian ${\partial(\tilde{u},\tilde{v}) \over \partial(x,y)}=(1-xy)e^{-x}e^{-y} =0$ if and only if $xy= 1$. 
Therefore, for $xy<1$, we have 
\begin{align*}
1-y={\partial s \over \partial x} = e^{-y} {\partial s \over \partial \tilde{u}}-ye^{-x}{\partial s \over \partial \tilde{v}}
&= {1 \over x} \tilde{u}{\partial s \over \partial \tilde{u}}-\tilde{v}{\partial s \over \partial \tilde{v}}\\
\text{ and } \qquad
1-x={\partial s \over \partial y} = -xe^{-y} {\partial s \over \partial \tilde{u}}+e^{-x}{\partial s \over \partial \tilde{v}}
&= -\tilde{u}{\partial s \over \partial \tilde{u}}+{1 \over y} \tilde{v}{\partial s \over \partial \tilde{v}},
\end{align*}
yielding
$$\tilde{u}{\partial s \over \partial \tilde{u}}=x=u{\partial s \over \partial u} \quad \text{ and } \quad \tilde{v}{\partial s \over \partial \tilde{v}}=y=v{\partial s \over \partial v}$$
with $s(u,v)\big|_{(\tilde{u},\tilde{v})=(0,0)}=s(0,0)=0$.
Hence, $(u,v)$ and $(\tilde{u},\tilde{v})$ as functions of $(x,y)$ will coincide in the whole domain $$\{(x,y) \in \mathbb{R}_+^2 \,:\, xy <1\},$$
and
$$x=\tilde{u}{\partial \over \partial u}s(\tilde{u},\tilde{v}) \quad\text{ and }\quad y=\tilde{v}{\partial \over \partial v}s(\tilde{u},\tilde{v}).$$
Here, by Prop.~\ref{prop:xyvsone}, $(x,y)$ is the smallest solution of $\tilde{u}=xe^{-y}$ and $\tilde{v}=ye^{-x}$.
Equations \eqref{eqn:xtytdsuv} follow.
\end{proof}

\end{document}